\documentclass{amsart}

\usepackage{amsmath, amsbsy,amsfonts,amsopn,amstext, euscript}
\usepackage{amscd, epic, amssymb}

\usepackage{amsxtra}

\usepackage[dvips]{rotating}
\usepackage{portland}

\usepackage{graphicx}

\usepackage[latin1]{inputenc}

\usepackage{color,epsfig}
\usepackage{times}
\usepackage[T1]{fontenc}
\usepackage{latexsym}
\usepackage{amsfonts}
\usepackage{amsmath}
\usepackage{amssymb}

\newtheorem{thm}[equation]{Theorem}

\newtheorem{lem}[equation]{Lemma}
\newtheorem{prop}[equation]{Proposition}
\newtheorem{defn}[equation]{Definition}

\newcommand{\N}{{\mathbb{N}}}
\newcommand{\Z}{{\mathbb{Z}}}
\newcommand{\Q}{{\mathbb{Q}}}

\newcommand{\C}{{\mathbb{C}}}
\newcommand{\F}{{\mathbb{F}}}

\newcommand{\MMM}{{\mathcal{M}}}
\newcommand{\HHH}{{\mathcal{H}}}

\def\Aut{{\rm Aut}}
\def\GL{{\rm GL}}
\def\PGL{{\rm PGL}}

\def\AGL {{\rm AGL}}
\def\ASL {{\rm ASL}}

\def\Mon{{\rm Mon}}
\def\ind{{\rm ind}}

\def\a{{\alpha}}

\def\s{{\sigma}}
\def\t{{\tau}}

\def\ut{{\underline{\tau}}}
\def\us{{\underline{\sigma}}}

\title{Generating sets of Affine groups of low genus}

\keywords{affine groups, Hurwitz spaces,  covers of curves}

\subjclass[2000]{Primary: 14H10, 14H30,  Secondary: 14H45, 14H35}

\author{K. Magaard, S.Shpectorov, G. Wang}
\address{School of Mathematics, University of Birmingham,
  Edgbaston, Birmingham B15 2TT, U.K} 

\email{K.M.: k.magaard@bham.ac.uk}
\email{S.S..: s.shpectorov@bham.ac.uk}
\email{G.W..: wangg@maths.bham.ac.uk}
\begin{document}

\begin{abstract}
We describe a new algorithm for computing braid orbits on Nielsen classes. As an application 
we classify all families of affine genus zero systems; that is all families of coverings of the Riemann sphere by 
itself such that the monodromy group is a primitive affine permutation group.  
\end{abstract}

\maketitle

\section{Introduction}
Let $G$ be a finite group. By a $G$-curve we mean a compact, connected Riemann surface $X$ of genus $g$ such that 
$G \leq \Aut(X)$. By a $G$-cover we mean the natural projection $\pi$ of $G$-curve $X$ to its orbifold $X/G$. 
In our situation $X/G$ is a Riemann surface of genus $g_0$ and $\pi$ is a branched cover. 
We are interested in Hurwitz spaces which are moduli spaces of $G$-covers. 
By $\HHH_r^{in}(G,g_0)$ we 
mean the Hurwitz space of equivalance classes of $G$-covers which are brached over $r$ points, such that
$g(X/G) = g_0$. We are mostly interested in the case $g_0 =0$ in which case we will simply write 
$\HHH_r^{in}(G)$.

Hurwitz spaces are used to study the moduli space ${\mathcal M}_g$ of curves of genus $g$.
For example Hurwitz himself showed the connectedness of ${\mathcal M}_g$ by first showing that every curve admits 
a simple cover onto $P^1\C$ and then showing that the Hurwitz space of simple covers is connected. 

The study of Hurwitz spaces is also closely related to the inverse problem of Galois theory. 
The precise connection was given by Fried and V\"olklein in \cite{FV}.

\begin{thm}[Fried-V\"olklein]  The following are true:
\begin{enumerate}
\item $\HHH_r^{in}(G)$ is an affine algebraic set which is defined over $\Q$.
\item If $G$ is a group with $Z(G) = 1$, then
there exists a Galois extension of $\Q(x)$, regular over $\Q$, with 
Galois group isomorphic to $G$ and with $r$ branch points if and only if $\HHH_r^{in}(G)$
has a $\Q$-rational point. (This also holds if $\Q$ is replaced throughout by any field of characteristic $0$).
\end{enumerate}
\end{thm}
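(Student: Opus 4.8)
The strategy is to realize $\HHH_r^{in}(G)$ as a finite \'etale cover of a configuration space --- first over $\C$, and then, by descent, over $\Q$ --- and finally to read statement (2) off the moduli interpretation of the resulting $\Q$-model. I begin over $\C$. Let $\mathcal{U}_r$ be the space of unordered $r$-element subsets of $\mathbb{P}^1$; sending such a subset to the degree-$r$ binary form vanishing on it identifies $\mathcal{U}_r$ with $\mathbb{P}^r\setminus\Delta$, the complement of the discriminant hypersurface, a smooth affine variety defined over $\Q$. Fix a base point $\mathbf{p}\in\mathcal{U}_r(\C)$ and a standard generating bouquet $\gamma_1,\dots,\gamma_r$ of $\pi_1(\mathbb{P}^1\setminus\mathbf{p})$ with $\gamma_1\cdots\gamma_r=1$. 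By the Riemann existence theorem, the fibre over $\mathbf{p}$ of the forgetful map sending a $G$-cover of $\mathbb{P}^1$ to its branch locus is canonically identified, via this bouquet, with the Nielsen class $\mathrm{Ni}=\{(g_1,\dots,g_r)\in G^r: g_1\cdots g_r=1,\;\langle g_1,\dots,g_r\rangle=G\}/\mathrm{Inn}(G)$, a finite set. Letting $\mathbf{p}$ vary, these fibres assemble into the finite topological covering of $\mathcal{U}_r$ determined by the braid action of $\pi_1(\mathcal{U}_r)$ on $\mathrm{Ni}$ (its connected components are exactly the braid orbits on $\mathrm{Ni}$). Since $\mathcal{U}_r$ is a smooth complex variety and the covering is finite \'etale, the Riemann existence theorem for varieties (Grauert-Remmert) makes $\HHH_r^{in}(G)$ a smooth affine complex variety, finite over $\mathcal{U}_r$; this is the analytic content of (1).

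The $\Q$-structure is the first substantive point. The braid action on $\mathrm{Ni}$ is described purely group-theoretically, hence is invariant under $\mathrm{Gal}(\overline{\Q}/\Q)$: for each $\sigma$ the conjugate cover ${}^{\sigma}\!\big(\HHH_r^{in}(G)\otimes\overline{\Q}\big)$ carries the same monodromy and is therefore isomorphic to $\HHH_r^{in}(G)\otimes\overline{\Q}$ over $\mathcal{U}_r\otimes\overline{\Q}$. Since $\mathcal{U}_r$ is defined over $\Q$ and carries $\Q$-rational points, this Galois-invariance descends the covering to a finite \'etale $\Q$-variety over $\mathcal{U}_r$; making the descent canonical --- so that the moduli interpretation needed for (2) descends along with it --- is where one invokes Fried's branch-cycle argument, or, equivalently, rigidifies the moduli problem by adjoining level structure and then applies Weil's descent criterion. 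This completes (1), and the version over an arbitrary field of characteristic $0$ follows by base change.

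For (2), the hypothesis $Z(G)=1$ is precisely what upgrades the $\Q$-model of (1) to a \emph{fine} moduli space. Over any characteristic-zero field $k$, a $G$-cover of $\mathbb{P}^1_k$ that is geometrically connected with exactly $r$ branch points has automorphism group, as a $G$-cover, equal to $Z(G)$ --- an automorphism of a connected Galois cover commuting with its deck group $G$ lies in the centre of $G$ --- and this is trivial by hypothesis; hence such covers have no non-trivial automorphisms, families of them descend without obstruction, and the functor they define is represented by $\HHH_r^{in}(G)\otimes k$. Granting this, a $\Q$-rational point of $\HHH_r^{in}(G)$ is precisely a $G$-cover $X\to\mathbb{P}^1$ defined over $\Q$ with $X$ geometrically irreducible and branch locus a $\Q$-rational divisor of degree $r$; passing to function fields, $\Q(X)/\Q(x)$ is then Galois with group $G$, regular over $\Q$ (regularity being equivalent to geometric irreducibility of $X$), with $r$ branch points, and conversely every such extension of $\Q(x)$ arises this way. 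This is the asserted equivalence, and again the passage to other characteristic-zero fields is formal.

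The main obstacle therefore lies not in the topology --- the Riemann existence theorem, the finiteness of $\mathrm{Ni}$, and the dictionary between covers and function-field extensions are standard --- but in the two descent steps: establishing the $\Q$-rational structure in (1), i.e.\ the effectivity and canonicity of the descent datum (handled by the branch-cycle argument, or by rigidification plus Weil descent), and, in (2), showing that $Z(G)=1$ forces the field of moduli of every such $G$-cover to be a field of definition, so that $\HHH_r^{in}(G)$ genuinely represents the moduli functor rather than merely its coarse version.
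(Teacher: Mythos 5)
This statement is not proved in the paper at all: it is quoted verbatim as background and attributed to Fried and V\"olklein \cite{FV}, so there is no in-paper argument to compare yours against. Judged on its own terms, your sketch follows the standard route of the original proof --- realize $\HHH_r^{in}(G)$ as the finite covering of the configuration space determined by the braid action on the Nielsen set, algebraize it by Grauert--Remmert/GAGA (a finite cover of the affine variety $\mathbb{P}^r\setminus\Delta$ is affine), descend to $\Q$, and then read off statement (2) from the moduli interpretation --- and you correctly isolate the two genuinely hard steps: effectivity/canonicity of the descent datum for (1), and the ``field of moduli equals field of definition'' step that $Z(G)=1$ buys you for (2). As a sketch of a cited theorem this is acceptable, since you defer exactly those steps to the branch-cycle argument and to Weil descent, which is where \cite{FV} also puts the work.

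Two cautions. First, part (1) is asserted for \emph{arbitrary} $G$, whereas your descent argument as written leans on rigidification and representability of the moduli functor, which is only automatic when $Z(G)=1$; for general $G$ the inner Hurwitz space need not be a fine moduli space, and the $\Q$-structure is obtained instead from the exact sequence of \'etale fundamental groups (a finite \'etale cover of $\mathcal{U}_{r,\Q}$ is the same as a finite set with an action of the arithmetic fundamental group extending the geometric braid action), so you should not let the $Z(G)=1$ hypothesis leak into your proof of (1). Second, a minor mismatch of conventions: the paper takes the base of $\HHH_r^{in}(G)$ to be the configuration space modulo $\PGL_2(\C)$, while you (following Fried--V\"olklein) work over $\mathbb{P}^r\setminus\Delta$ without that quotient; this does not affect the truth of the theorem but is worth noting if you splice your argument into this paper's setup.
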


The space $\HHH_r^{in}(G)$ admits an action of ${\rm Gal}({\bar \Q}/\Q)$. Thus a $\Q$-rational point 
of $\HHH_r^{in}(G)$ must lie in an irreducible component which is defined over $\Q$.

Hurwitz spaces are covering spaces. In our situation, where $X/G \cong P^1\C$, the base space
of $\HHH_r^{in}(G)$ is the configuration space of $P^1\C$ with $r$ marked points. That is the space
$$D_r:=\{ S \subset P^1\C \ : \ |S| = r \}/\PGL_2(\C)$$
$$ = (P^1\C \setminus \{0,1,\infty\})^{r-3} \setminus \Delta_{r-3}.$$ Where 
$$\Delta_{r-3}:= \{(x_1,\dots, x_{r-3}) \in (P^1\C \setminus \{0,1,\infty\})^{r-3} \ : \ \exists i,j \ \mbox{with} \ x_i = x_j \}.$$
 The fundamental group of $D_r$
is the Hurwitz braid group on $r$-strings and is a quotient of the Artin braid group. 
Thus the connected components of $\HHH_r^{in}(G)$ are the orbits of the fundamental group of $D_r$ 
on the fibres. 
We define ${\mathcal T}_{r}(G)$ to be those elements $(\t_1,\dots,\t_r) \in G^r $
such that 
$$G = \langle (\t_1,\dots,\t_r) \rangle,$$  $$\prod_{i=1 }^r \t_i = 1,$$ and 
$$|G| (\sum_{i = 1}^r (|\t_i|-1)/|\t_i|) = 2(|G|+g -1).$$ 

The fibres of $\HHH_r^{in}(G)$ are parametrized by elements of 
${\underline \tau} \in {\mathcal F}_r(G) := {\mathcal T}_r(G)/G $ ,
where the action of $G$ on $G^r$, and hence on ${\mathcal T}_r(G)$, is via diagonal conjugation.
The action of the fundamental group of $D_r$ on ${\mathcal T}_r(G)$
is the well known action of the Artin braid group which we will define in the next section. This
action commutes with the action of $G$ via diagonal conjugation and 
hence induces a well defined action on ${\mathcal F}_r(G)$. 
From the definition of the action it is clear that the action of the Artin braid group preserves 
the set of conjugacy classes $C_i :=\t_i^G$ of elements of $\t_i \in \ut$. 
For an r-tuple of conjugacy classes $C_1,\dots,C_r$ we define the subset
$${\mathcal Ni}(C_1,\dots,C_r) :=\{\ut \in {\mathcal F}_r(G) \ : \ \exists
\s \in S_r \ \mbox{such that} \ \t_i \in C_{i\s} \ \mbox{for all} \ i \},$$
called the {\it Nielsen class} of $C_1,\dots,C_r$. The braid group action on ${\mathcal F}_r(G)$
preserves Nielsen classes which implies that connected components of $\HHH_r^{in}(G)$ are parametrized 
by braid orbits on Nielsen classes. The subset $\HHH(G,C_1,...,C_r)^{in} \subset \HHH(G)_r^{in}$ of 
$G$-curves $X$ with $g(X/G) = 0$ is a union of components parametrized by ${\mathcal Ni}(C_1,\dots,C_r) $.
By slight abuse of notation it is also called a Hurwitz space.

Generally it is very difficult to determine the number of braid orbits on Nielsen classes and hence not 
too much is known in general. There is the celebrated result of Clebsch, alluded to above, where he shows 
that if $G = S_n$ and all elements of $\ut$ are transpositions, then the corresponding Hurwitz space
$\HHH(G,C_1,...,C_r)^{in}$ is connected. His result was recently generalized by Liu and Osserman \cite{LO} 
who show that if $G = S_n$ and $C_i$ is represented by $g_i$ where each
$g_i$ is a single cycle of length $|g_i|$, then $\HHH(G,C_1,\dots,C_r)$ is connected.
 
On the other hand, Fried \cite{F} showed that if $G =A_n$, $g>0$ and all $C_i$ are represented 
by $3$-cycles, then $\HHH(G,g_0,C_1,\dots,C_r)$, the space of $G$ curves with $g_0 = g(X/G)$ and 
ramification in classes $C_1,\dots, C_r$, has one component if $g_0 = 0$ and two components if $g_0 > 0$.
In the latter case the components are separated by the lifting invariant. 

Finally we mention the theorem of Conway-Parker which shows that if the Schur multiplier of $G$ is generated
by commutators and the ramification involves all conjugacy classes of $G$ sufficiently often, then the 
corresponding Hurwitz space is connected, hence defined over $\Q$.

Nevertheless deciding whether or not $\HHH(G,g_0,C_1,\dots,C_r)$ is connected is still an open problem, both 
theoretically and algorithmically.
The algorithmic difficulties are due to the fact that the length of the Nielsen classes involved grows quickly.
 The package BRAID developed by Magaard, Shpectorov and V\"olklein
\cite{MSV} computes braid orbits algorithmically. This package is being upgraded by James, Magaard, Shpectorov \cite{JMS} to generalize to 
the situation of orbits of the mapping class group on the fibres of the 
Hurwitz space $\HHH(G,g_0,C_1,...,C_r)$ of $G$-curves $X$ with $g(X/G) = g_0$.

In this paper we introduce an algorithm which is designed to deal with long Nielsen classes.
Our idea is to represent a Nielsen class as union of direct products of shorter classes, thereby 
enabling us to enumerate orbits of magnitude $k^3$ where $k$ is an upper bound for what our 
standard BRAID algorithm can handle.

As an application of our algorithm we classify all braid orbits of Nielsen classes of primitive affine genus zero systems. 
That is to say that we find the connected components of $\HHH(G,C_1,...,C_r)$ of $G$-curves $X$, 
where $G$ is primitive and affine with translation subgroup $N$ and point stabilizer $H$, such that $g(X/H) = 0 = g(X/G)$.
Recall that $G$ is primitive if and only if $H$ acts irreducibly on the elementary abelian subgroup $N$ via conjugation. 
Equivalently this means that $G$ acts primitively
on the right cosets of $N$ via right multiplication and $N$ acts regularly on them. We compute 
that there are exactly $939$ braid orbits of primitive affine genus zero systems with $G'' \neq 0$.
The distribution in terms of degree and number of branch points is given in Table \ref{agz}. This 
completes the work of Neubauer on the affine case of the Guralnick-Thompson conjecture.

\begin{table} \label{agz}
\caption{Affine Primitive Genus Zero Systems: Number of Components}
\centering
\scalebox{0.75}{
\begin{tabular}{|l|c|c|c|c|c|c|c|r|}
\hline
Degree & \# Group& \# ramification&\#comp's & \#comp's  & \#comp's   & \#comp's  & \#comp's & \#comp's\\
      & isom. types  & types & r=3  &  r=4       & r = 5    &   r = 6  & r = 7& total  \\  \hline
8 & 2 & 50 & 29 & 17 & 9 &3 & 1& 59 \\ \hline
16 & 14 & 86 & 441 & 69 & 18 & 2 & - & 530 \\ \hline
32 & 1& 24 &169 &3 &- & -&- & 172 \\ \hline
64 & 14 &34 & 69 & - & - & - & - & 69 \\ \hline
9 & 4 & 26 & 14 & 10 & 2 & - & - & 26 \\ \hline
27 & 6 & 20 & 32 & 2 & - & - & - & 34 \\ \hline
81 & 4 & 5 & 6 &- &- &- &- & 6 \\ \hline
25 & 7 & 19 & 16 & 3 &- &- &- & 19 \\ \hline
125 & 1 & 2 & 8 &- &- &- &- & 8\\ \hline
49 & 1 & 2 & 6 &- &- &- &- & 6\\ \hline
121 & 1 & 2 & 10 &- &- &- &- & 10\\ \hline \hline
Totals & 55 & 270 & 800 & 104 & 29 & 5 & 1 & 939 \\ \hline 
\end{tabular}
}
\end{table}

Strictly speaking our new algorithm is not needed to settle the classification of braid orbits of Nielsen classes 
of primitive affine genus zero systems. However the problem is a good test case for our algorithm both 
as a debugging tool and as comparision for speed. Indeed our new algorithm shortens run times of BRAID 
by several orders of magnitude. 

The paper is organized as follows. In Section 2 we describe our algorithm and illustrate it with
an example that stresses the effectiveness. In Section 3 we discuss how we find 
all Hurwitz loci of affine primitive genus zero systems, displayed in tables at the end.

\section{The Algorithm}

We begin by recalling some basic definitions. 
The {\it Artin braid group} $B_r$ has the following presentation in terms of generators 
 $$\{Q_1,Q_2,\dots,Q_{r-1}\}$$ and relations 
$$Q_iQ_{i+1}Q_i=Q_{i+1}Q_iQ_{i+1};$$
$$Q_iQ_j=Q_jQ_i \quad \mbox{for}\quad |i-j|\geq 2.$$
The action of $B_r$ on $G^r$, or  {\it braid action} for short, is defined for all $i=1,2,\dots,r-1$ via:
$$Q_i: \quad (g_1,\dots,g_i,g_{i+1},\dots,g_r)\rightarrow (g_1,\dots,g_{i+1},g_{i+1}^{-1}g_ig_{i+1},\dots,g_r).$$
 
Evidently the braid action preserves the product $\prod_{i=1}^r g_i$ and the set of conjugacy classes
$\{C_1,\dots,C_r\}$ where $C_i:=g_i^G$. If the classes $C_i$ are pairwise distinct, then 
$B_r$ permutes the set $\{C_1,\dots,C_r\}$ like $S_r$ permutes the set $\{1,\dots,r\}$ where
$Q_i$ induces the permutation $(i,i+1)$. Thus we see that $B_r$ surjects naturally onto $S_r$ with 
kernel $B^{(r)}$, the {\it pure Artin braid group }.

We note that the group $B^{(r)}$ is generated by the elements 
\begin{align*}
Q_{ij}&:=Q_{j-1}\cdot\cdot\cdot Q_{i+1}Q_i^2Q_{i+1}^{-1}\cdot\cdot\cdot Q_{j-1}^{-1}\\
&=Q_i^{-1}\cdot\cdot\cdot Q_{j-2}^{-1}Q_{j-1}^2Q_{j-2}\cdot\cdot\cdot Q_i,
\end{align*} 
for $1\leq i<j\leq r$. 

If $P$ is a partition of $\{1,\dots,r\}$ with stabilizer $S_P \leq S_r$, then we denote by $B_P$ 
the inverse image of $S_P$ in $B_r$. The group $S_P$ is customarily 
called a parabolic subgroup of $S_r$ and thus we call $B_P$ a {\it parabolic subgroup}
of the Artin braid group. Now $B^r$ acts on ${\mathcal Ni}(C_1,\dots,C_r)$ permuting the 
classes $C_i$. Clearly the subgroup of $B^r$ which preserves the order of the conjugacy classes is a 
parabolic subgroup of $B^r$ and thus, from now on, we assume that the set of conjugacy classes 
$\{C_1,\dots,C_r\}$ is ordered in such a way that if $C_i = C_j$, then for all $i < k < j$ we have 
$C_k = C_i$. Let $P := P_1 \cup \dots \cup P_s$ be the partition of $\{1,\dots,r\}$ obtained by 
defining $x \sim y$ if $C_x = C_y$. The parabolic subgroup $B_P \leq B_r$ stabilizes the order 
of the classes in $\{C_1,\dots,C_r\}$. 

\begin{lem} \label{gens} 
If $\{C_1,\dots,C_r\}$ are ordered as above and $P$ is the corresponding partition, then the set 
${\mathcal Ni}^o(C_1,\dots,C_r) :=\{ \ut \in {\mathcal Ni}(C_1,\dots,C_r) \ : \ \t_i \in C_i  \mbox{ for all } i \}$ is an orbit of $B_P$.
\end{lem}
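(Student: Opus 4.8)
The plan is to prove the two statements that together say ``single orbit'': that $B_P$ maps $\mathcal{Ni}^o(C_1,\dots,C_r)$ into itself, and that $B_P$ is transitive on it. The whole argument runs through the surjection $B_r\to S_r$ sending $Q_i$ to $(i,i+1)$, under which, by construction, $B_P$ is exactly the preimage of the parabolic subgroup $S_P$ that permutes indices only within the blocks $P_1,\dots,P_s$ of equal classes. The one structural fact I will use is that the braid action permutes the classes by this image: if $\g\in B_r$ has image $\bar\g\in S_r$, then the $i$-th entry of $\g\cdot\ut$ lies in $C_{i\bar\g}$ whenever $\ut\in\mathcal{Ni}$ has its $i$-th entry in $C_i$.

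\emph{Invariance.} Let $\ut\in\mathcal{Ni}^o$, so $\t_i\in C_i$ for all $i$, and let $\b\in B_P$ with image $\bar\b\in S_P$. By the fact just recalled, the $i$-th entry of $\b\cdot\ut$ lies in $C_{i\bar\b}$. Since $\bar\b\in S_P$, the index $i\bar\b$ lies in the same block as $i$, so $C_{i\bar\b}=C_i$; hence every entry of $\b\cdot\ut$ remains in its prescribed class and $\b\cdot\ut\in\mathcal{Ni}^o$. Thus $\mathcal{Ni}^o$ is $B_P$-stable, and is in particular a union of $B_P$-orbits.

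\emph{Transitivity.} This is where the reduction from $B_r$ to the parabolic $B_P$ takes place, and the point is that it is automatic. Take $\ut,\ut'\in\mathcal{Ni}^o$ and suppose $\g\in B_r$ satisfies $\g\cdot\ut=\ut'$. Comparing classes position by position, the $i$-th entry of $\g\cdot\ut$ lies in $C_{i\bar\g}$ while the $i$-th entry of $\ut'$ lies in $C_i$; as these agree for every $i$ we obtain $C_{i\bar\g}=C_i$ for all $i$. By the definition of the partition $P$ this says precisely that $\bar\g$ preserves each block, i.e. $\bar\g\in S_P$, and therefore $\g\in B_P$. So the very braid that connects two ordered tuples already lies in the parabolic subgroup, and $\ut,\ut'$ are $B_P$-equivalent.

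The main obstacle is thus not the passage to $B_P$, which the image argument makes free, but the production of the connecting element $\g\in B_r$ in the first place: one needs that any two elements of $\mathcal{Ni}^o$ lie in a common orbit of $B_r$ on $\mathcal{Ni}(C_1,\dots,C_r)$. This connectivity of the Nielsen class under the full braid action is the substantive input, and the lemma is exactly the assertion that, once it holds, reordering the classes and restricting to $\mathcal{Ni}^o$ does not break the orbit apart. I would also record the easy companion fact that every $\ut\in\mathcal{Ni}$ can be carried into $\mathcal{Ni}^o$ by some $\b\in B_r$ (choose $\b$ mapping to a permutation that sorts the classes of $\ut$ into the standard order $C_1,\dots,C_r$), so that $\mathcal{Ni}^o$ genuinely meets, and hence controls, the braid action on all of $\mathcal{Ni}(C_1,\dots,C_r)$.
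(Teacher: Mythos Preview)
The paper states this lemma without proof, so there is nothing to compare against directly. More to the point, the lemma as literally written cannot be correct in general: a Nielsen class $\mathcal{Ni}(C_1,\dots,C_r)$ need not be a single $B_r$-orbit---computing these orbits is the entire subject of the paper---and hence $\mathcal{Ni}^o$ need not be a single $B_P$-orbit. The sentence immediately following the lemma makes the intended content clear: ``the orbits of $B_P$ on $\mathcal{Ni}^o(C_1,\dots,C_r)$ determine the components of $\HHH^{in}(G,C_1,\dots,C_r)$.'' What is really being asserted is that restriction gives a bijection between $B_r$-orbits on $\mathcal{Ni}$ and $B_P$-orbits on $\mathcal{Ni}^o$; equivalently, each $B_r$-orbit on $\mathcal{Ni}$ meets $\mathcal{Ni}^o$ in exactly one $B_P$-orbit.

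You have in fact proved this correct version, and you have also correctly diagnosed the problem with the literal statement. Your invariance step shows $\mathcal{Ni}^o$ is $B_P$-stable; your transitivity step shows that two elements of $\mathcal{Ni}^o$ lying in the same $B_r$-orbit already lie in the same $B_P$-orbit; and your ``companion fact'' shows that every $B_r$-orbit meets $\mathcal{Ni}^o$. Together these three give the orbit bijection. The only misstep is one of framing: you describe ``connectivity of the Nielsen class under the full braid action'' as a \emph{substantive input} that the lemma presupposes. That hypothesis is neither available (it is generally false) nor needed. Drop it, reorganize the conclusion as a bijection of orbit sets rather than as transitivity on all of $\mathcal{Ni}^o$, and your argument is complete and is exactly what the paper uses downstream.
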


This means that the orbits of $B_P$ on ${\mathcal Ni}^o(C_1,\dots,C_r)$ determine the components 
of $\HHH^{in}(G,C_1,\dots,C_r)$. As $B_P$-orbits are shorter than the corresponding $B_r$-orbits 
by a factor of $[S_r:S_P]$, this is a significant advantage. 

For the record we note:

\begin{lem} 
The set of $Q_{ij}$'s such that $i$ and $j$ lie in different blocks of $P$
together with the $Q_i$'s such that $i$ and $i+1$ lie in the same block of $P$ is a set of generators of $B_P$.
\end{lem}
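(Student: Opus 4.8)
The plan is to show the two stated families of braids lie in $B_P$ and then that they generate it, by projecting to $S_r$ and controlling the kernel $B^{(r)}$. First I would check membership: $Q_i$ for $i,i+1$ in the same block of $P$ maps under the surjection $B_r\to S_r$ to the transposition $(i,i+1)$, which lies in $S_P$ since it fixes the partition; and each $Q_{ij}$ lies in the pure braid group $B^{(r)}=\Ker(B_r\to S_r)\le B_P$. So the subgroup $\Lambda$ generated by the listed elements is contained in $B_P$.

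For the reverse inclusion, I would argue in two layers. First, the images in $S_r$ of the transpositions $(i,i+1)$ with $i,i+1$ in a common block of $P$ generate $S_P = S_{P_1}\times\cdots\times S_{P_s}$, since each $S_{P_k}$ is the symmetric group on a set of consecutive integers (the blocks are intervals, by the ordering convention imposed before the lemma) and adjacent transpositions generate each symmetric factor. Hence $\Lambda$ surjects onto $S_P$, and therefore $B_P = \Lambda\cdot B^{(r)}$. So it remains to show $B^{(r)}\le\Lambda$. Second layer: $B^{(r)}$ is generated by the $Q_{ij}$ for all $1\le i<j\le r$ (recalled just above the statement). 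Those $Q_{ij}$ with $i,j$ in different blocks are already in $\Lambda$ by hypothesis, so I only need the $Q_{ij}$ with $i,j$ in the same block. For such a pair, both $i$ and $j$ lie in an interval block, so all of $i,i+1,\dots,j$ lie in that block, meaning $Q_i,\dots,Q_{j-1}\in\Lambda$; the defining formula $Q_{ij}=Q_{j-1}\cdots Q_{i+1}Q_i^2Q_{i+1}^{-1}\cdots Q_{j-1}^{-1}$ then exhibits $Q_{ij}$ as a word in $\Lambda$. Thus all generators of $B^{(r)}$ lie in $\Lambda$, giving $B^{(r)}\le\Lambda$ and hence $B_P=\Lambda$.

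The one point that needs genuine care — and is really the crux — is the claim that the blocks of $P$ are intervals of consecutive integers. This is exactly what the ordering convention adopted immediately before Lemma~\ref{gens} guarantees: the classes are arranged so that equal classes are contiguous, i.e. if $C_i=C_j$ then $C_k=C_i$ for all $i<k<j$, which says precisely that each equivalence class under $x\sim y\iff C_x=C_y$ is an interval. Once that is in hand, both the "adjacent transpositions generate $S_P$" step and the "$Q_i,\dots,Q_{j-1}$ available for same-block $Q_{ij}$" step go through without friction, and the rest is the routine split-extension bookkeeping sketched above.
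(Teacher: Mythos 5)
Your argument is correct: the paper states this lemma without proof (``For the record we note:''), and your two-layer argument --- reducing modulo the pure braid group $B^{(r)}$ to the Young subgroup $S_P$ generated by adjacent transpositions, then absorbing $B^{(r)}$ via the listed formula for $Q_{ij}$ in terms of $Q_i,\dots,Q_{j-1}$ --- is the standard verification the authors evidently had in mind. You are also right to flag the interval structure of the blocks as the load-bearing point; it is exactly what the ordering convention imposed before the lemma supplies.
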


\subsection{Nodes}

As we noted in the introduction, Nielsen classes tend to be very large and thus we need to find ways 
to handle them effectively. Our algorithm achieves efficiency by interpreting tuples as elements 
of a Cartesian product. For this to be compatible with the action of $B_P$ on ${\mathcal Ni}^o(C_1,\dots,C_r)$, or equivalently, with the action of  $B_P \times G$ on 
$${\mathcal T}^o(C_1,\dots,C_r) :=\{(\t_1,\dots,\t_r) \in {\mathcal T}_r(G) \ : \  \t_i \in C_{i} \ \mbox{for all} \ i \},$$ we need to make some additional 
definitions. 
Let $ 1 < k < r$ and define 
$$L_k:= \langle Q_i \ : \ i \leq k-1 \rangle \cap B_P,$$
$$R_k:= \langle Q_i \ : \ i \geq k+1 \rangle \cap B_P.$$

Clearly $[L_k,R_k]=1$ and every $B_P$-orbit on ${\mathcal Ni}^o(C_1,\dots,C_r)$ is a union of 
$(L_k\times R_k)$-orbits. Equivalently every $B_P \times G$-orbit on ${\mathcal T}^o(C_1,\dots,C_r)$
is a union of $(L_k\times R_k \times G)$-orbits, which we call {\it nodes}. We refer to $k$ as the 
{\it level}. Typically we  choose $k$ to be close to $r/2$. If $(g_1,\dots,g_r)$ is a representative 
of a level $k$ node, then we split it into its {\it head} $(g_1,\dots,g_k)$ and its {\it tail} 
$(g_{k+1},\dots,g_r)$.  Since our package BRAID works with product $1$ tuples we will identify the 
head and the tail with the product $1$ tuples  $(g_1,\dots,g_k,x)$ and $(y,g_{k+1},\dots,g_r)$ 
respectively, where $y = \prod_{i=1}^kg_i$, $x = \prod_{i=k+1}^rg_i$. We note that $x = y^{-1}$ and 
that the actions of $L_k$ and $R_k$ centralize $x$ and $y$. Hence the conjugacy class $C_x:= x^G$ 
is an invariant of the node, which we call the {\it nodal type}. The following is clear. 

\begin{lem} 
For every node the heads of all tuples in the node form an orbit under $L_k \times G$. Similarly 
the tails form an $R_k \times G$-orbit. 
\end{lem}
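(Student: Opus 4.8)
The plan is to exhibit the set of heads occurring in a node as the image of that node under a projection that is equivariant for $L_k$, $R_k$ and $G$ simultaneously, and then to invoke the elementary fact that the image of a transitive action under an equivariant map is again transitive.

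Fix a node, that is, an $(L_k\times R_k\times G)$-orbit $\mathcal O$ inside ${\mathcal T}^o(C_1,\dots,C_r)$, and let $h$ send $(g_1,\dots,g_r)\in\mathcal O$ to its head $(g_1,\dots,g_k)$ --- equivalently, to the product-one tuple $(g_1,\dots,g_k,x)$ with $x=\prod_{i=k+1}^r g_i=(g_1\cdots g_k)^{-1}$; the argument applies verbatim to either model. First I would record the three equivariance statements. The group $G$ acts diagonally by conjugation on source and target alike, so $h$ is $G$-equivariant (in the product-one model this uses that $x$ conjugates consistently with $g_1\cdots g_k$). For $L_k$: any element of $\langle Q_i : i\le k-1\rangle$ is a word in braid generators that affect only coordinates $\le k$, hence it alters only the first $k$ entries and preserves the partial product $g_1\cdots g_k$; thus it maps heads to heads, commutes with $h$, and fixes the auxiliary entry $x$. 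For $R_k$: any element of $\langle Q_i : i\ge k+1\rangle$ affects only coordinates $\ge k+1$, hence fixes $(g_1,\dots,g_k)$ pointwise and preserves $\prod_{i=k+1}^r g_i=x$; so $R_k$ acts trivially on the target and $h$ is trivially $R_k$-equivariant. Intersecting with $B_P$ changes none of this, since each property holds for every element of the ambient group, not merely the displayed generators.

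With equivariance established, put $H:=L_k\times R_k\times G$. Since $\mathcal O$ is a single $H$-orbit and $h$ is $H$-equivariant, $h(\mathcal O)$ --- the set of heads appearing in the node --- is a single $H$-orbit; and since the $R_k$-factor acts trivially on it, this is precisely a single $L_k\times G$-orbit, which proves the first assertion. The statement for tails follows by the symmetric argument, exchanging the roles of $L_k$ and $R_k$ and using the tail projection $(g_1,\dots,g_r)\mapsto(g_{k+1},\dots,g_r)$, respectively $(y,g_{k+1},\dots,g_r)$ with $y=\prod_{i=1}^k g_i$.

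There is no genuine obstacle here: the lemma is essentially a repackaging of the relation $[L_k,R_k]=1$ together with the fact that the braid action commutes with diagonal conjugation by $G$. The only points that merit care are the compatibility of the product-one reformulation of ``head'' and ``tail'' with all three actions --- which is exactly the already-recorded remark that $L_k$ and $R_k$ centralize $x$ and $y$ --- and the observation that passing to the intersection with $B_P$ does not disturb the ``one side of $k$'' behaviour of the braid generators.
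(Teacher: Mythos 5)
Your argument is correct: the head map is equivariant for $L_k$ and $G$ and is fixed by $R_k$ (since braids in $\langle Q_i : i\ge k+1\rangle$ leave the first $k$ coordinates and the product $x=\prod_{i=k+1}^r g_i$ untouched), so the image of a single $(L_k\times R_k\times G)$-orbit is a single $(L_k\times G)$-orbit. The paper offers no proof --- it labels the lemma as clear --- and your write-up simply supplies the details of exactly the intended argument, so there is nothing to compare beyond noting that your care about the intersection with $B_P$ and about $x$ being centralized by $L_k$ and $R_k$ is well placed.
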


We refer to the orbits above as the head (respectively, tail) orbit of the node. With the notation 
as above we see that the head orbit is of ramification type $(C_1,\dots, C_k,C_x)$ and the tail orbit 
is of ramification type $(C_y,C_{k+1}\dots, C_r)$. This observation allows us to determine all 
possible head and tail orbits independently, using BRAID.  Note that subgroups generated by the 
head or tail may be proper in $G$.

Accordingly, we give the following definitions. For a ramification type $\{C_1,\dots, C_r\}$, the 
partion $P$ as above, and conjugacy classes $C$ and $D$ of $G$, we define  
$${\mathcal L}_{k,C}:=\{(g_1,\dots,g_k,x) \ : \ g_i \in C_i, x \in C \ \mbox{and} \ 1 = (\prod_{i = 1}^k g_i)x \},$$
$${\mathcal R}_{k,D}:=\{(y,g_{k+1},\dots,g_r) \ : \ y \in D, g_i \in C_i \ \mbox{and} \ 1 = y(\prod_{i = 1}^k g_i) \}.$$

Note that $L_k \times G $ acts on ${\mathcal L}_{k,C}$ for all choices of $C$, and 
$R_k \times G $ acts on ${\mathcal R}_{k,D}$ for all choices of $D$. By slight abuse of terminology 
we call $(L_k \times G)$-orbits of $\cup_{C} {\mathcal L}_{k,C}$  {\it heads} and 
$(R_k \times G)$-orbits of $\cup_{D} {\mathcal R}_{k,D}$ {\it tails}. Clearly, for each node, its 
head orbit is among the heads and its tail orbit is among the tails. Furthermore the node is a subset 
of the Cartesian product of its head and its tail. We can now restate our task of finding nodes as 
follows. We need to find pairs of heads and tails which can correspond to nodes and then 
identify nodes within the Cartesian product of the head and the tail. 

The first of these tasks is achieved with the following definition. A head in ${\mathcal L}_{k,C}$ {\it 
matches} a tail in  ${\mathcal R}_{k,D}$ if $D = C^{-1} := \{x^{-1} \ : \ x \in C \}$. Since matching 
is specified entirely in terms of $C$ and $D$, we note that either every head in ${\mathcal L}_{k,C}$ 
matches every tail in  ${\mathcal R}_{k,D}$, or ${\mathcal L}_{k,C} \times {\mathcal R}_{k,D}$ contains 
no matching pairs. The head and tail of a node must necessarily be matching. Experiments show that most 
pairs of mathching heads and tails lead to nodes. So no further restrictions are necessary for our 
algorithm. 

Suppose now that ${\mathcal H} \subset {\mathcal L}_{k,C}$ and  ${\mathcal T} \subset {\mathcal R}_{k,D}$ 
match; i.e. $D = C^{-1}$. Our task now is to find all nodes in ${\mathcal H} \times {\mathcal T}$. 
There are several issues that we need to address. First of all, a pair of representatives 
$(g_1,\dots,g_k,x) \in {\mathcal H}$ and $(y,g_{k+1},\dots,g_r) \in {\mathcal R}$ can only give 
a representative of a node if $y = x^{-1}$. Therefore ${\mathcal H} \times {\mathcal T}$ is not a union 
of nodes; in fact most pairs of representative tuples do not work. We address this as follows. 

Let us select a particular element $x_0 \in C$. A natural choice for $x_0$ is, for example, the minimal 
element of $C$ with respect to the ordering defined in GAP \cite{GAP}. Let $y_0 = x_0^{-1}$. For 
${\mathcal H}$ and ${\mathcal T}$ as above, we define 
${\mathcal H}_0:= \{ (g_1,\dots,g_k,x) \in {\mathcal H} \ : \  x = x_0 \}$ and 
${\mathcal T}_0:= \{ (y,g_{k+1},\dots,g_r) \in {\mathcal T} \ : \  y = y_0 \}$.
We call ${\mathcal H}_0$ and ${\mathcal T}_0$ the {\it shadows} of ${\mathcal H}$ and ${\mathcal T}$.

\begin{lem} 
The shadows  ${\mathcal H}_0$ and  ${\mathcal T}_0$ are orbits for $L_k \times C_G(x_0)$ and $R_k \times C_G(x_0)$ respectively.
\end{lem}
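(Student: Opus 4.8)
The plan is to show that $\mathcal{H}_0$ is a single orbit under $L_k \times C_G(x_0)$ by ``restricting'' the known transitivity of $L_k \times G$ on $\mathcal{H}$ (Lemma above) to the shadow. First I would record the key structural fact: since the braid generators $Q_i$ with $i \le k-1$ fix the last coordinate $x$ (the braid action only rewrites coordinates $i$ and $i+1$), the action of $L_k$ on $\mathcal{L}_{k,C}$ leaves $x$ unchanged; and the diagonal $G$-action sends $x$ to $g^{-1}xg$. Hence for $(g_1,\dots,g_k,x)$ and $(g_1',\dots,g_k',x')$ in the same $L_k \times G$-orbit, the elements $x$ and $x'$ are $G$-conjugate (which we already knew, both lie in $C$), and more precisely any group element carrying the first tuple to the second via $(\beta,g) \in L_k \times G$ must satisfy $g^{-1}xg = x'$.

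Now take two tuples in the shadow $\mathcal{H}_0$, say $h = (g_1,\dots,g_k,x_0)$ and $h' = (g_1',\dots,g_k',x_0)$. By Lemma (transitivity of $L_k\times G$ on $\mathcal{H}$) there is $(\beta, g) \in L_k \times G$ with $(\beta,g)\cdot h = h'$. Comparing last coordinates and using the observation above, $g^{-1} x_0 g = x_0$, i.e. $g \in C_G(x_0)$. Therefore $(\beta,g) \in L_k \times C_G(x_0)$ already, and it carries $h$ to $h'$ inside $\mathcal{H}_0$. This shows $L_k \times C_G(x_0)$ is transitive on $\mathcal{H}_0$. The reverse containment — that $L_k \times C_G(x_0)$ preserves $\mathcal{H}_0$, i.e. maps shadow tuples to shadow tuples — is immediate: $L_k$ fixes the last coordinate and $C_G(x_0)$ fixes $x_0$ under conjugation, so the last coordinate stays equal to $x_0$; that the resulting tuple is still in $\mathcal{H}$ follows because $\mathcal{H}$ is an $L_k\times G$-orbit, hence $L_k \times C_G(x_0)$-invariant. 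The argument for $\mathcal{T}_0$ is identical, with $R_k$ in place of $L_k$ (the generators $Q_i$, $i \ge k+1$, fix the first coordinate $y$) and $y_0 = x_0^{-1}$ in place of $x_0$; note $C_G(y_0) = C_G(x_0)$.

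The one point requiring a little care — the ``main obstacle,'' though it is mild — is verifying that the stabilizer constraint extracted from the last coordinate is exactly membership in $C_G(x_0)$ and not something weaker: one must be sure the $L_k$-part genuinely contributes nothing to the action on the final entry, which is where the explicit form of the braid action (only adjacent transposition-like moves on entries $\le k$) is used. Once that is in hand, the proof is the short ``orbit restricts to orbit on a fibre with trivial-ish stabilizer action'' argument sketched above, and no computation is needed.
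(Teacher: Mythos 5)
Your argument is correct and is precisely the argument the paper leaves implicit (the lemma is stated without proof there): the key point, that $L_k$ centralizes the last coordinate while the diagonal $G$-action conjugates it, is exactly what the paper records just before introducing shadows, and restricting the transitivity of $L_k\times G$ on the head $\mathcal{H}$ to the fibre over $x_0$ forces the $G$-component into $C_G(x_0)$. The only micro-point worth a half-sentence is that $\mathcal{H}_0$ is nonempty (conjugate any representative's last entry to $x_0$ by a suitable $g\in G$), which is needed before calling it an orbit.
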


Our first issue is now resolved as the representatives of ${\mathcal H}_0$ and  ${\mathcal T}_0$ 
automatically combine to give a product $1$ tuple. Furthermore for a node ${\mathcal N}$ of type $C$ 
we can similarly define the {\it shadow of ${\mathcal N}$} to be 
${\mathcal N}_0 := \{(g_1,\dots,g_r) \in {\mathcal N} \ : \  \prod_{i=k+1}^r g_i = x_0 \}$.

\begin{lem} The shadow  ${\mathcal N}_0$ is an orbit for $L_k \times R_k \times C_G(x_0)$ and furthermore it fully lies in ${\mathcal H}_0 \times {\mathcal T}_0$
where ${\mathcal H}_0$ and ${\mathcal T}_0$ are the shadows of the head and tail of ${\mathcal N}$.
\end{lem}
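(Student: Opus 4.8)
The plan is to read both assertions off the definitions, using three facts already available in this section: $[L_k,R_k]=1$ and the braid action commutes with diagonal $G$-conjugation, so that $L_k\times R_k\times G$ genuinely acts on ${\mathcal T}^o(C_1,\dots,C_r)$; the actions of $L_k$ and $R_k$ fix the values of $x=\prod_{i=k+1}^r g_i$ and $y=\prod_{i=1}^k g_i$; and the head orbit ${\mathcal H}$ (resp. tail orbit ${\mathcal T}$) of ${\mathcal N}$ consists of exactly the heads (resp. tails) of the tuples in ${\mathcal N}$.

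First I would dispose of the containment ${\mathcal N}_0\subseteq{\mathcal H}_0\times{\mathcal T}_0$. If $(g_1,\dots,g_r)\in{\mathcal N}_0$, then by definition $\prod_{i=k+1}^r g_i=x_0$, and hence $\prod_{i=1}^k g_i=x_0^{-1}=y_0$; thus the head of this tuple is $(g_1,\dots,g_k,x_0)$ and its tail is $(y_0,g_{k+1},\dots,g_r)$. The head lies in ${\mathcal H}$ because ${\mathcal H}$ is the head orbit of ${\mathcal N}$, and its last coordinate is $x_0$, so it lies in the shadow ${\mathcal H}_0$; symmetrically the tail lies in ${\mathcal T}_0$. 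This settles the second assertion.

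For the first assertion I would show that $L_k\times R_k\times C_G(x_0)$ both preserves ${\mathcal N}_0$ and is transitive on it. Preservation is immediate: $L_k$ and $R_k$ leave $\prod_{i=k+1}^r g_i$ equal to $x_0$, an element $h\in C_G(x_0)$ sends it to $x_0^h=x_0$, and all three operations preserve the ambient orbit ${\mathcal N}\supseteq{\mathcal N}_0$. The set ${\mathcal N}_0$ is nonempty: every tuple of ${\mathcal N}$ has $\prod_{i=k+1}^r g_i$ conjugate to $x_0$ (both lie in the nodal type $C$), so conjugating any tuple of ${\mathcal N}$ suitably produces one with this product equal to $x_0$. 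For transitivity, take $\mathbf g,\mathbf g'\in{\mathcal N}_0$; since ${\mathcal N}$ is a single $(L_k\times R_k\times G)$-orbit and the three actions commute, we may write $\mathbf g'=(l,\rho,h)\cdot\mathbf g$ with $l\in L_k$, $\rho\in R_k$, $h\in G$. Following the partial product $\prod_{i=k+1}^r g_i$ under this element, $l$ and $\rho$ leave it at $x_0$ and $h$ then sends it to $x_0^h$; since $\mathbf g'\in{\mathcal N}_0$ this must again be $x_0$, so $h\in C_G(x_0)$. Therefore $\mathbf g$ and $\mathbf g'$ lie in one $L_k\times R_k\times C_G(x_0)$-orbit, and together with preservation and nonemptiness this shows ${\mathcal N}_0$ is precisely such an orbit.

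The whole argument is bookkeeping with no real obstacle; the one place to be careful is the last step, tracking how the $G$-component $h$ of a transporting triple acts on the distinguished partial product $\prod_{i=k+1}^r g_i$, as this is exactly what forces $h$ into $C_G(x_0)$.
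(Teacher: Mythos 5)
The paper states this lemma without proof (it is one of several assertions in this subsection the authors treat as clear), so there is nothing to diverge from: your argument is correct and is exactly the verification the authors intend. You correctly track the invariance of the partial product $\prod_{i=k+1}^r g_i$ under $L_k$, $R_k$ and diagonal conjugation to pin the transporting $G$-component into $C_G(x_0)$, and the containment in ${\mathcal H}_0\times{\mathcal T}_0$ follows from the definitions as you say.
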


Thus we may work exclusively with shadows of heads, tails and nodes.

Our second issue is that combining representatives of matching head and tail shadows may not produce 
a tuple in ${\mathcal T}(C_1,\dots,C_r)$, because it may not generate $G$. We define {\it prenodes} 
as $L_k \times R_k \times G$-orbits on 
$$\{ \ut \in C_1 \times \dots \times C_r\ :\ \prod_{i=1}^r \tau_i = 1 \}.$$ 
Clearly every node is a prenode. Our terminology, head, tail, type and shadow, extends to prenodes in the obvious way.

\begin{lem} If ${\mathcal H}$ and ${\mathcal T}$ are matching heads and tails, then ${\mathcal H}_0 \times {\mathcal T}_0$ is a disjoint union 
of prenode shadows.
\end{lem}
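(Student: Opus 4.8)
The plan is to reduce the claim to a bookkeeping statement about the $C_G(x_0)$-action, using the lemmas already proved about shadows. Start with $\mathcal{H}_0 \subset \mathcal{L}_{k,C}$ and $\mathcal{T}_0 \subset \mathcal{R}_{k,D}$ with $D = C^{-1}$. Since every element of $\mathcal{H}_0$ has last coordinate $x_0$ and every element of $\mathcal{T}_0$ has first coordinate $y_0 = x_0^{-1}$, glueing a head shadow representative $(g_1,\dots,g_k,x_0)$ to a tail shadow representative $(y_0,g_{k+1},\dots,g_r)$ by deleting the repeated middle entry produces an $r$-tuple $(g_1,\dots,g_r)$ with $\prod_{i=1}^r g_i = (\prod_{i=1}^k g_i)(\prod_{i=k+1}^r g_i) = (x_0^{-1})(x_0) = 1$. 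So every point of $\mathcal{H}_0 \times \mathcal{T}_0$ maps, under this glueing map $\gamma$, into the set $\{\ut \in C_1 \times \dots \times C_r : \prod \tau_i = 1\}$ on which prenodes are defined. Conversely $\gamma$ is injective (the $x_0$/$y_0$ coordinates are recovered as the two partial products), so $\mathcal{H}_0 \times \mathcal{T}_0$ is identified with a subset of that set.

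Next I would show this subset is a union of $L_k \times R_k \times C_G(x_0)$-orbits. By the lemma on shadows of heads and tails, $\mathcal{H}_0$ is an $L_k \times C_G(x_0)$-orbit and $\mathcal{T}_0$ is an $R_k \times C_G(x_0)$-orbit, so $\mathcal{H}_0 \times \mathcal{T}_0$ is an orbit of $L_k \times C_G(x_0) \times R_k \times C_G(x_0)$ acting with two independent copies of $C_G(x_0)$. Under $\gamma$ these actions become: $L_k$ acts on the head coordinates, $R_k$ on the tail coordinates (these commute and fix $x_0$), and each copy of $C_G(x_0)$ acts by simultaneous conjugation on its respective half while fixing $x_0$. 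The diagonal copy of $C_G(x_0)$ inside the product therefore acts by global simultaneous conjugation, i.e. exactly the restriction to $C_G(x_0)$ of the diagonal $G$-action; this is the action used to define prenodes and their shadows. So the image $\gamma(\mathcal{H}_0 \times \mathcal{T}_0)$ is invariant under $L_k \times R_k \times C_G(x_0)$, hence a union of its orbits.

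Finally, by the lemma identifying the shadow $\mathcal{N}_0$ of a prenode as precisely an $L_k \times R_k \times C_G(x_0)$-orbit, and the companion lemma that $\mathcal{N}_0 \subset \mathcal{H}_0 \times \mathcal{T}_0$ whenever $\mathcal{H},\mathcal{T}$ are the head and tail of $\mathcal{N}$, each $L_k \times R_k \times C_G(x_0)$-orbit inside $\gamma(\mathcal{H}_0 \times \mathcal{T}_0)$ is the shadow of exactly one prenode, and distinct orbits give distinct prenode shadows (two prenodes with a common shadow point coincide, since the full $L_k \times R_k \times G$-orbit is then the same). Hence $\mathcal{H}_0 \times \mathcal{T}_0$ decomposes as a disjoint union of prenode shadows.

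The main obstacle is the second step: tracking carefully that the two separate copies of $C_G(x_0)$ acting on head and tail shadows, after glueing, collapse to the single diagonal $C_G(x_0)$-action that governs prenode shadows — in other words, checking that no orbit of $\mathcal{H}_0 \times \mathcal{T}_0$ is split into several prenode shadows and none is merged. This is where the constraint that both halves share the fixed element $x_0$ (rather than an arbitrary element of $C$) is essential, and it is exactly the reason shadows were introduced.
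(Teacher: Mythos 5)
Your proof is correct, and it follows the route the paper clearly intends (the paper states this lemma without proof): glue head-shadow and tail-shadow representatives into product-one $r$-tuples, observe that the resulting set is invariant under $L_k\times R_k\times C_G(x_0)$ because ${\mathcal H}_0$ and ${\mathcal T}_0$ are orbits of $L_k\times C_G(x_0)$ and $R_k\times C_G(x_0)$ respectively, and then identify each such orbit with the shadow of the unique prenode through it via the preceding lemma that prenode shadows are exactly $L_k\times R_k\times C_G(x_0)$-orbits contained in ${\mathcal H}_0\times{\mathcal T}_0$. Your emphasis on the collapse of the two independent $C_G(x_0)$-actions to the diagonal one is exactly the point that makes the decomposition work, so nothing is missing.
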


So now our task is to identify all prenodes within ${\mathcal H}_0 \times {\mathcal T}_0$, that is to 
find a representative for each prenode. To achieve this, we work at the level of $L_k$- and $R_k$-orbits 
of ${\mathcal H}_0$ and ${\mathcal T}_0$ respectively. Let ${\mathcal O}_h \subset {\mathcal H}_0$ 
be an $L_k$-orbit and ${\mathcal O}_t \subset {\mathcal T}_0$ be and $R_k$-orbit. We define normalizers
$$ N_{C_G(x_0)}({\mathcal O}_h):= \{ c \in C_G(x_0) \ : \ \ut^c \in {\mathcal O}_h \ \mbox{for all} \ \ut \in {\mathcal O}_h \} $$
and 
$$ N_{C_G(x_0)}({\mathcal O}_t):= \{ c \in C_G(x_0) \ : \ \us^c \in {\mathcal O}_t \ \mbox{for all} \ \us \in {\mathcal O}_t \}.$$
Because the $G$-action commutes with that of $L_k$ and $R_k$, it suffices to check the conditions above 
for just a single $\ut \in {\mathcal O}_h$ and a single $\us \in {\mathcal O}_t$, respectively.

\begin{prop}  \label{verts} 
If ${\mathcal O}_h \subset {\mathcal H}_0$ is an $L_k$-orbit and ${\mathcal O}_t \subset {\mathcal T}_0$ 
is an $R_k$-orbit, then the prenode shadows in ${\mathcal H}_0 \times {\mathcal T}_0$ are in one-to-one 
correspondence with the double cosets 
$$ N_{C_G(x_0)}({\mathcal O}_h)\backslash C_G(x_0) / N_{C_G(x_0)}({\mathcal O}_t).$$
If $\{d_1,\dots,d_s\}$ is a set of double coset representatives, then a set of representatives for the prenodes can be chosen as  
$\{ (g_1,\dots,g_k,g_{k+1}^{d_i},\dots,g_r^{d_i}) \ : \ 1\leq i \leq s \}$, where 
$(g_1,\dots,g_k,x_0)$ and $(y_0,g_{k+1},\dots,g_r)$ are arbitrary representatives of 
${\mathcal O}_h$ and ${\mathcal O}_t$, respectively.
\end{prop}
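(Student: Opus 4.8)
The plan is to set up a bijection between prenode shadows in $\mathcal{H}_0 \times \mathcal{T}_0$ and the double cosets $N_{C_G(x_0)}(\mathcal{O}_h) \backslash C_G(x_0) / N_{C_G(x_0)}(\mathcal{O}_t)$ explicitly, by exhibiting the map in both directions and checking it is well-defined and a bijection. First I would fix, once and for all, representatives $(g_1,\dots,g_k,x_0)$ of $\mathcal{O}_h$ and $(y_0,g_{k+1},\dots,g_r)$ of $\mathcal{O}_t$, noting that since $y_0 = x_0^{-1}$ every concatenation $(g_1,\dots,g_k,g_{k+1}^d,\dots,g_r^d)$ with $d \in C_G(x_0)$ is a genuine product-$1$ tuple in $C_1 \times \dots \times C_r$, because conjugating the tail by $d$ fixes its product $y_0$. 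So each $d \in C_G(x_0)$ yields a tuple, hence lies in some prenode shadow; this gives a surjection from $C_G(x_0)$ onto the set of prenode shadows meeting $\mathcal{H}_0 \times \mathcal{T}_0$, and by the previous lemma these are all the prenode shadows contained in $\mathcal{H}_0 \times \mathcal{T}_0$.

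Next I would show that two elements $d, d' \in C_G(x_0)$ give tuples in the same prenode shadow if and only if $N_{C_G(x_0)}(\mathcal{O}_h)\, d\, N_{C_G(x_0)}(\mathcal{O}_t) = N_{C_G(x_0)}(\mathcal{O}_h)\, d'\, N_{C_G(x_0)}(\mathcal{O}_t)$. The prenode shadow is an $L_k \times R_k \times C_G(x_0)$-orbit (the shadow version of Lemma for nodes, applied to prenodes), so $(g_1,\dots,g_k,g_{k+1}^d,\dots,g_r^d)$ and $(g_1,\dots,g_k,g_{k+1}^{d'},\dots,g_r^{d'})$ lie in the same shadow iff there exist $\ell \in L_k$, $\rho \in R_k$, $c \in C_G(x_0)$ carrying one to the other. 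Since $L_k$ acts only on the first $k$ coordinates and $R_k$ only on the last $r-k$, and $c$ acts diagonally, matching the head part forces $(g_1,\dots,g_k,x_0)^{\ell c} = (g_1,\dots,g_k,x_0)$ up to the $R_k$-triviality on the head — i.e. $\ell$ and $c$ together stabilize $\mathcal{O}_h$'s chosen representative, which after absorbing the $L_k$-ambiguity says $c \in N_{C_G(x_0)}(\mathcal{O}_h)$; symmetrically the tail part gives that $c^{-1}$ composed with the change $d \mapsto d'$ lands in $N_{C_G(x_0)}(\mathcal{O}_t)$. Unwinding, $d' \in N_{C_G(x_0)}(\mathcal{O}_h)\, d\, N_{C_G(x_0)}(\mathcal{O}_t)$. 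The converse is the easy direction: if $d' = n_h d n_t$ with $n_h \in N_{C_G(x_0)}(\mathcal{O}_h)$, $n_t \in N_{C_G(x_0)}(\mathcal{O}_t)$, then using that $n_h$ stabilizes $\mathcal{O}_h$ (so $(g_1,\dots,g_k,x_0)^{n_h^{-1}}$ is again in $\mathcal{O}_h$, reachable by some $\ell \in L_k$) and similarly $n_t$ stabilizes $\mathcal{O}_t$, one checks that the global conjugation by $n_h$ on the concatenated tuple moves it within the same $L_k \times R_k \times C_G(x_0)$-orbit to the tuple built from $d'$.

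The last sentence of the proposition — that the listed tuples form a set of prenode representatives — then follows immediately from the bijection together with the surjectivity observation: every prenode shadow in $\mathcal{H}_0 \times \mathcal{T}_0$ contains a tuple of the form $(g_1,\dots,g_k,g_{k+1}^{d},\dots,g_r^{d})$, and distinct double coset representatives $d_i$ give distinct shadows. I would also remark that the independence of $\mathcal{O}_h$ or $\mathcal{O}_t$ from the particular representative chosen within them is exactly what the normalizer definitions were designed to absorb, so there is no hidden choice.

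The main obstacle I expect is the careful bookkeeping in the ``only if'' direction: disentangling the simultaneous action of $L_k$, $R_k$, and the diagonal $C_G(x_0)$ on a concatenated tuple, and verifying that the head-matching condition really does pin the $C_G(x_0)$-component down to $N_{C_G(x_0)}(\mathcal{O}_h)$ rather than something larger. The key facts that make this go through are $[L_k,R_k]=1$, that the $G$-action commutes with both $L_k$ and $R_k$, that $C_G(x_0)$ is exactly the diagonal subgroup preserving the ``$x = x_0$'' slice (so no element outside it can appear), and the remark already in the text that the normalizer conditions need only be checked on a single tuple in each orbit. With those in hand the argument is essentially a diagram chase.
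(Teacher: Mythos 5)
Your argument is correct and is essentially the paper's own proof made explicit: the paper observes that $C_G(x_0)$ acts transitively on the set of $L_k$-orbits of ${\mathcal H}_0$ and on the set of $R_k$-orbits of ${\mathcal T}_0$ with point stabilizers the two normalizers, identifies prenode shadows with $C_G(x_0)$-orbits on the product of these two orbit sets, and then cites the standard orbit/double-coset correspondence, whereas you unwind that same correspondence by hand at the level of tuples. The only (harmless) discrepancy is a left/right convention: the explicit computation naturally places $d'$ in $N_{C_G(x_0)}({\mathcal O}_t)\,d\,N_{C_G(x_0)}({\mathcal O}_h)$ rather than the transposed double coset, which is equivalent via inversion and does not affect the count or the set of representatives.
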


\begin{proof} 
Let ${\mathcal X}$ be the set of $L_k$-orbits of ${\mathcal H}_0$ and ${\mathcal Y}$ be the set of $R_k$-orbits of ${\mathcal T}_0$.
Clearly $C_G(x_0)$ acts transitively on ${\mathcal X}$ and on ${\mathcal Y}$ with point stabilizers $N_{C_G(x_0)}({\mathcal O}_h)$ and 
$N_{C_G(x_0)}({\mathcal O}_t)$ respectively. Furthermore, the prenode shadows correspond to the $C_G(x_0)$-orbits on ${\mathcal X} \times {\mathcal Y}$.
The latter correspond to the double cosets as above. 
\end{proof}

So to construct all nodes we proceed as follows:\\

\noindent{\bf Algorithm: Find all level  $\mathbf{k}$ nodes} 
\begin{itemize}
\item Input: A group $G$, conjugacy classes $C_1,\dots, C_r$ and an integer $1 \leq k \leq r$.

\item For each type $C$: 

\begin{itemize}
\item Set $D:=C^{-1}$ and find all heads and tails by using BRAID \cite{MSV}.
\item From each head and tail select its shadow.
\item For each pair of head and tail shadows compute the normalizers and the double coset representatives as in Proposition \ref{verts}.
\item For each prenode check whether or not its representative generates $G$. Store the prenodes that pass this test as nodes.
\end{itemize}

\item Output all nodes. Nodes are sorted by their type, head, tail, and double coset representative.
\end{itemize}
\medskip
We close this subsection with the observation that the sum of the lengths of the nodes is computable at this stage. This means 
that we have calculated $|{\mathcal T}(C_1,\dots,C_r)|$ by a method different from that of Staszewski and V\"olklein \cite{SV}.

\subsection{Edges}

Our next step is to define a graph on our set of nodes whose connected components correspond to the braid orbits on the Nielsen class
${\mathcal Ni}(C_1,\dots, C_r)$.

\begin{defn}
Let $\Gamma_k(C_1,\dots,C_r)$ be the graph whose vertices are the level $k$ nodes of ${\mathcal Ni}(C_1,\dots, C_r)$.
We connect two nodes ${\mathcal N}_1$
and ${\mathcal N}_2$ by an edge if and only if there exists a tuple $\ut \in N_1$ and an element $Q \in B_P$
such that $\ut Q \in {\mathcal N}_2$.
\end{defn}

We remark that it is clear that the connected components of $\Gamma_k(C_1,\dots,C_r)$ are complete graphs and are 
in one-to-one correspondence with the braid orbits on the Nielsen class ${\mathcal Ni}(C_1,\dots, C_r)$. \\

Our algorithm for connecting vertices is as follows. Let $S$ be the set of generators of $B_P$ as in Lemma \ref{gens} minus those which are
contained in $L_k \times R_k$. For each node ${\mathcal N}$ we select a random tuple $\ut \in {\mathcal N}$ and apply a randomly
chosen generator $Q \in S$ to it. Using the head and tail of $\ut Q$ we find the node ${\mathcal N}'$ which contains it.
If ${\mathcal N} \neq {\mathcal N}'$ we record the edge. We repeat this until we have $s$ successes at ${\mathcal N}$.
Note that this does not mean that we find $s$ distinct neighbors for ${\mathcal N}$. If after a pre-specified number of tries $t$ we have 
no successes then we conclude that ${\mathcal N}$ is an isolated node; i.e. it is a $B_P$-orbit.

Once we have gone through all nodes, we obtain a subgraph $\Gamma'$ of  $\Gamma_k(C_1,\dots,C_r)$. We now find the connected components 
of $\Gamma'$ and claim that these are likely to be identical to those of  $\Gamma_k(C_1,\dots,C_r)$. Clearly if $\Gamma'$ is connected, 
then so is $\Gamma_k(C_1,\dots,C_r)$. Hence in this case our conclusion is deterministic. In other cases our algorithm is Monte-Carlo.

Based on our experiments, the situation where $\Gamma_k(C_1,\dots,C_r)$ is connected is the most likely outcome. It is interesting that 
even for small values of $s$ we tend to get that $\Gamma'$ is connected whenever $\Gamma_k(C_1,\dots,C_r)$ is connected. Also 
$t$ does not need to be large because if ${\mathcal N}$ is not isolated then almost any choice of $\ut$ and $Q$ will produce an 
edge. This, together with the way we represent tuples as products of heads and tails, makes this part of the algorithm very fast. 

Here is the formal describtion of the second part of the algorithm. \\

\noindent{\bf Algorithm: Finding the braid orbits} 
\begin{itemize}
\item Input: The $k$-nodes of ${\mathcal Ni}(C_1,\dots, C_r)$ arranged in terms their type, head, tail 
and double coset representative.

\item Initialize the edge set $E$ to the empty set.
\item For each node ${\mathcal N}$:

\begin{itemize}
\item Set counters c and d to 0.
\item Generate a random tuple $\ut$ from  ${\mathcal N}$ by selecting random head and tail.
\item Apply a randomly chosen $Q \in S$ to $\ut$.
\item Identify the node ${\mathcal N}'$ containing $\ut Q$ via its head and tail.
\item If ${\mathcal N} \neq {\mathcal N}'$, then 
\begin{itemize}
\item Set c to c+1 and set d to d+1.
\item Add the edge $({\mathcal N}, {\mathcal N}')$ to $E$ unless it is already known.
\end{itemize}
\item Else, 
\begin{itemize}
\item Set c to c+1.
\end{itemize}
\item Repeat this until either $d = s$ or $d = 0$ and $c = t$.
\end{itemize}

\item Determine and output the connected components of the graph $\Gamma'$ whose vertices are the input nodes and whose edge set is $E$.
\end{itemize}
\medskip

\subsection{Type $\mathbf{\{1_G\}}$ nodes}

During the development of the algorithm we noticed that a significant number of nodes are of type 
$C = \{ 1_G\}$, often more than half of all nodes. This can be explained by the fact that $C_G(1_G) = G$ is largest among all classes. Furthermore, all computations 
for these nodes are substantially slower than for nodes of types not equal to $C$. 
The next lemma gives a criterion when such nodes can be disregarded.

\begin{lem}\label{skip1}
Suppose ${\mathcal N}$ is a prenode of type $\{ 1_G\}$ and $\ut$ is its representative. 
Let $H$ and $T$ be the subgroups generated by the head and tail of $\ut$, respectively.
If $H$ and $T$ do not centralize each other, then the $B_P$-orbit containing ${\mathcal N}$
contains also a prenode ${\mathcal N}'$ of type not equal to $\{ 1_G\}$.
\end{lem}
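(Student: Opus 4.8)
The plan is to move $\ut$ by a single pure braid to a tuple whose product over the tail positions is exactly the commutator that witnesses $[H,T]\neq 1$. First I record that $H$ and $T$ centralise each other precisely when each generator $g_a$ of the head commutes with each generator $g_b$ of the tail (the centraliser of a set equals the centraliser of the subgroup it generates); so the hypothesis $[H,T]\neq 1$ supplies indices $a\in\{1,\dots,k\}$ and $b\in\{k+1,\dots,r\}$ with $[g_a,g_b]\neq 1$. I will apply the pure braid generator $Q_{ab}$. Although only the $Q_{ij}$ joining distinct blocks of $P$ are singled out as part of a generating set of $B_P$, the group $B_P$ contains the whole pure braid group $B^{(r)}$, hence contains $Q_{ab}$ for every choice of $a<b$; so this move stays inside the $B_P$-orbit of ${\mathcal N}$, regardless of how $a$ and $b$ sit relative to the blocks of $P$, and no case distinction is needed.

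The computational core is an explicit description of how $Q_{ab}$ (for $a<b$) transforms a tuple $(g_1,\dots,g_r)$: it fixes $g_m$ for $m<a$ and for $m>b$; it replaces $g_a$ by $g_b^{-1}g_ag_b$; it replaces $g_m$ by $[g_b,g_a]\,g_m\,[g_b,g_a]^{-1}$ for each $a<m<b$; and it replaces $g_b$ by $(g_ag_b)^{-1}g_b(g_ag_b)$, where $[g_b,g_a]=g_b^{-1}g_a^{-1}g_bg_a$. I will establish this by induction on $b-a$, using the relation $Q_{ab}=Q_{b-1}Q_{a,b-1}Q_{b-1}^{-1}$ read off from the definition of $Q_{ab}$ together with the definition of the braid action, the base case $b=a+1$ being $Q_{a,a+1}=Q_a^2$, which conjugates $g_a$ and $g_{a+1}$ by $g_ag_{a+1}$.

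Granting that description, note that here $a\le k<b$ automatically, so the positions $k+1,\dots,b-1$ lie strictly between $a$ and $b$; thus the entries of $\ut Q_{ab}$ in positions $k+1,\dots,r$ are the conjugates $[g_b,g_a]g_m[g_b,g_a]^{-1}$ for $k+1\le m\le b-1$, then $(g_ag_b)^{-1}g_b(g_ag_b)$, then the unchanged entries $g_{b+1},\dots,g_r$. Using the identity $[g_b,g_a]^{-1}(g_ag_b)^{-1}g_b(g_ag_b)=g_b$, the product of these entries telescopes to $[g_b,g_a]\cdot g_{k+1}g_{k+2}\cdots g_r$; in other words $Q_{ab}$ left-multiplies the tail product by $[g_b,g_a]$. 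Since ${\mathcal N}$ has type $\{1_G\}$, its representative satisfies $g_{k+1}g_{k+2}\cdots g_r=1$, so the tail product of $\ut Q_{ab}$ is exactly $[g_b,g_a]\neq 1$. Hence the prenode ${\mathcal N}'$ containing $\ut Q_{ab}$ has type $[g_b,g_a]^G\neq\{1_G\}$, and ${\mathcal N}'$ lies in the $B_P$-orbit of ${\mathcal N}$ because $Q_{ab}\in B_P$.

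The only delicate point is the induction proving the transformation formula for $Q_{ab}$ in the exact sign and side conventions fixed above, and then checking that the conjugating factors $[g_b,g_a]$ inserted at the intermediate positions cancel as asserted; once that bookkeeping is in place, the conclusion is a one-line computation. I do not anticipate any conceptual obstacle beyond it.
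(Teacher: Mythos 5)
Your proof is correct and follows the same strategy as the paper's: both apply a single pure braid $Q_{st}$ with $s\le k<t$ chosen from a non-commuting pair of head/tail generators, and both conclude that the tail product --- hence the type --- is altered by the corresponding commutator. The only substantive difference is the choice of generating set for $H$: the paper passes to the partial products $h_i=\prod_{j=i}^k g_j$ and selects $h_s$ not commuting with $g_t$, whereas you work with the entries $g_a$ themselves and justify this with an explicit transformation formula for $Q_{ab}$; that formula (which checks out, including the base case $Q_{a,a+1}=Q_a^2$, the inductive step via $Q_{ab}=Q_{b-1}Q_{a,b-1}Q_{b-1}^{-1}$, and the telescoping identity $[g_b,g_a]^{-1}(g_ag_b)^{-1}g_b(g_ag_b)=g_b$) shows the new tail product is exactly $[g_b,g_a]\cdot\prod_{m>k}g_m$, so your version supplies precisely the computation the paper dismisses as ``straightforward to see,'' and in fact indicates that the plain entries $g_s$, rather than the partial products $h_s$, are what govern the change of type under $Q_{st}$.
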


\begin{proof}
Let $\ut=(g_1,\dots,g_r)$. Then $H=\langle g_1,\dots,g_k\rangle$ and $T=\langle g_{k+1},\dots,g_r\rangle$. 
We can also take a different set of generators for $H$, namely, the partial products 
$h_i=\prod_{j=i}^k g_j$, $i=1,\dots,k$. Since $H$ and $T$ do not centralize each other, 
some $h_s$ does not commute with some $g_t$, where $s\le k$ and $t>k$. It is now straightforward to see 
that that the pure braid $Q_{st}$ takes $\ut$ to a tuple, whose type is different from the type 
of $\ut$.
\end{proof}

This criterion is in fact exact. Indeed, it is clear that if $H$ and $T$ centralize each other then 
no pure braid (and more generally, no braid that preserves head and tail classes) can change the type. 
So the type within the $B_P$-orbit can change only if the same conjugacy class is present in the head 
and in the tail. However, a class that is present both in the head and in the tail must be central, 
and so the type still cannot change. Hence when $H$ and $T$ centralize each other then then 
the type (whether identity or not) remains constant on the entire $B_P$ orbit.

When the prenode ${\mathcal N}$ is a node, we have $G=\langle H,T\rangle$, and so the condition 
in the lemma fails very rarely. Thus, in most cases we need not consider nodes of type $\{1_G\}$. 
This turns out to be a significant computational advantage.

\subsection{An Example}
Let $G = {\rm AGL}_4(2)$, the group of affine linear transformations, acting on the 
16 points of $\F_2^4$; the vector space of dimension $4$ over the 
field of $2$ elements. $G$ has a unique conjugacy class of involutions whose elements have precisely 
$8$ fixed points in their action on $\F_2^4$ (we call this class $2A$) and another whose elements have 
exactly $4$ fixed points in their action on $\F_2^4$ (we call this second class $2B$).
We consider the ramification type ${\bar C}= (2A,2A,2A,2B,2B,2B)$. 
The structure constant for ${\bar C}$ is 
$21,267,671,040$; i.e. $$|{\mathcal T}(2A,2A,2A,2B,2B,2B)| \leq 21,267,671,040 .$$ This 
yields that an upper bound for the size of the corresponding $B_P$-orbit is $65,934$.
The available version of our package BRAID finds an orbit of this size within minutes. 
However, verifying that there is only one generating orbit takes days. This is 
due to the fact that BRAID spends most of its time searching for non-generating tuples in order to account 
for the full structure constant. Staszewski 
and V\"olklein \cite{SV} provided us with the function {\it NumberOfGeneratingNielsenTuples} which often 
helps to get around this problem. However, in this example 
the function runs out of memory on a $64G$ computer. On the other hand, 
after splitting ${\bar C}$ across the middle into $(2A,2A,2A,C)$ and $(D,2B,2B,2B)$ we compute
heads and tails within minutes.

\begin{table}[ht]
\caption{Time spent on generating heads and tails}
\centering
\begin{tabular}{c c c c}
\hline\hline
half & total number of orbits & time spent & type with the most orbits\\[0.5ex]
\hline
(2A,2A,2A,C) & 155 & 2 mins & (2A,2A,2A,2A)\\
(D,2B,2B,2B) & 619 & 10 mins & (4B,2B,2B,2B)\\ [1ex]
\hline
\end{tabular}
\end{table}

The step of contructing all nodes also takes little time. The group $G$ has 
$24$ non-identity conjugacy classes and hence we have $24$ types of nodes.
\begin{table}[ht]
\caption{Results from the function {\bf AllMatchingPairs}}
\centering
\begin{tabular}{c c c c c}
\hline\hline
number of total pairs &  most pairs & least pairs & number of types with no pairs \\[0.5ex]
\hline
903 & 3A & 4E & 11\\[1ex]
\hline
\end{tabular}
\end{table}


As shown in the table, our graph $\Gamma_k(2A,2A,2A,2B,2B,2B)$ has $903$ vertices. 
Drawing edges and checking that the graph $\Gamma'$ is connected took less than $5$ minutes.
The result is that $\Gamma_k(2A,2A,2A,2B,2B,2B)$ is connected, which means that 
the Hurwitz space $\HHH({\rm AGL}_4(2),2A,2A,2A,2B,2B,2B)$ is connected.

\section{Genus Zero Systems and the Guranlick-Thompson Conjecture}
We now come to our main application. We recall some background.
Suppose $X$ is a compact, connected Riemann surface of genus $g$,
and $\phi: X \rightarrow P^1\C$ is meromorphic of degree $n$. 
Let $B := \{ x \in P^1\C \ : \ |\phi^{-1}(x)| < n \}$ be the set of branch points of $\phi$.
It is well known that $B$ is a finite set and that if $b_0 \in P^1\C \setminus B$, then the fundamental group 
$\pi_1(P^1\C \setminus B,b_0)$ acts transitively on $F:= \phi^{-1}(b_0)$ via path lifting. 
The image of the action of $\pi_1(P^1\C \setminus B,b_0)$ on $F$ is 
called the {\it monodromy group} of $(X,\phi)$ and is denoted by $\Mon(X,\phi)$.
\\

We are interested in the structure of the monodromy group when the genus of $X$ is less than or equal to two and 
$\phi$ is indecomposable in the sense that there do not exits holomorphic functions $\phi_1 : X \rightarrow Y$ and $\phi_2 : Y \rightarrow P^1\C $
of degree less than the degree of $\phi$ such that $\phi = \phi_1 \circ \phi_2$. The condition that $X$ is connected implies that $\Mon(X,\phi)$ acts transitively on $F$, 
whereas the condition that $\phi$ is indecomposable implies that the action of $\Mon(X,\phi)$ on $F$ is primitive.

Our first question relates to a conjecture made by Guralnick and Thompson \cite{GT} in 1990. By $cf(G)$ we denote the set of 
isomorphism types of the composition factors of $G$. In their paper \cite{GT} Guralnick and Thompson defined the set 
$${\mathcal E}^*(g) = (\bigcup_{(X,\phi)} cf(\Mon(X,\phi))) \  \setminus \ \{A_n, \Z/p\Z \ : \ n > 4 \ , \ p \ \mbox{a prime} \}$$
 where $X \in \MMM(g)$, the moduli space 
of curves of genus $g$, and $\phi : X \longrightarrow P^1(\C)$ is meromorphic. 
They conjectured that ${\mathcal E}^*(g)$
 is finite 
for all $g \in \N$. Building on work of Guralnick-Thompson \cite{GT}, Neubauer \cite{N92}, Liebeck-Saxl \cite{LSa}, and Liebeck-Shalev
\cite{LSh}, the conjecture was established in 2001 by Frohardt and Magaard \cite{FM}.\\

The set ${\mathcal E}^*(0)$ is distinguished in that it is contained in ${\mathcal E}^*(g)$  for all $g$. Moreover 
the proof of the Guranlick-Thompson conjecture shows that it is possible to compute ${\mathcal E}^*(0)$ explicitly. 

The idea of the proof of the  Guranlick-Thompson conjecture is to employ Riemann's Existence Theorem to
translate the geometric problem to a problem in group theory as follows.
If $\phi: X \rightarrow P^1\C$  is as above with branch points $B = \{b_1,\dots, b_r\}$,
then the set of elements $\a_i \in \pi_1(P^1\C \setminus B,b_0)$, each represented by a simple loop around 
$b_i$, forms a canonical set of generators of $\pi_1(P^1\C \setminus B,b_0)$. Let $g$ is the genus 
of $X$. We denote by $\s_i$ the image $\a_i$ in $\Mon(X,\phi)\subset S_F \cong S_n$. Thus we 
have that $$\Mon(X,\phi) = \langle \s_1,\dots,\s_r\rangle \subset S_n $$ and that 
$$ \Pi_{i = 1}^r \s_i = 1.$$ Moreover the conjugacy class 
of $\s_i$ in $\Mon(X,\phi)$ is uniquely determined by $\phi$. Recall that the index of a permutation $\s \in S_n$ 
is equal to the minimal 
number of factors needed to express $\s$ as a product of transpositions.
The Riemann-Hurwitz formula asserts that 
$$ 2(n+g-1) = \sum_{i=1}^r \ind(\s_i). $$ 
 
\begin{defn} If $\t_1,\dots,\t_r \in S_n$ generate a transitive subgroup $G$ of $S_n$ 
such that 
$\Pi_{i=1}^r \tau_i = 1$ and  $ 2(n+g-1) = \sum_{i=1}^r \ind(\tau_i) $ for some $g \in \N$, then 
we call $(\tau_1,\dots,\tau_r)$ a {\it genus $g$ system} and $G$ a genus $g$ group. We call a genus $g$ system 
$(\tau_1,\dots,\tau_r)$ primitive if the subgroup of $S_n$ it generates is primitive. 
\end{defn}

If $X$ and $\phi$ are as above, then we say that 
$(\s_1,\dots,\s_r)$ is the genus $g$ system induced by $\phi$. 

\begin{thm}[Riemann Existence Theorem] 
For every genus $g$ system $(\tau_1,\dots,\tau_r)$ in $S_n$, there exists a Riemann surface $Y$ and a cover 
$\phi': Y \longrightarrow P^1\C$ with branch point set $B$ such that the genus $g$ system induced by 
$\phi'$ is $(\tau_1,\dots,\tau_r)$.
\end{thm}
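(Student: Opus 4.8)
The plan is to run the monodromy construction of the previous paragraphs in reverse: starting from the combinatorial data $(\t_1,\dots,\t_r)$, build a branched cover of $P^1\C$ realizing it. Write $B=\{b_1,\dots,b_r\}$ and $U:=P^1\C\setminus B$, fix a base point $b_0\in U$, and choose a canonical system of generators $\a_1,\dots,\a_r$ of $\pi_1(U,b_0)$ — each $\a_i$ a simple loop around $b_i$ — subject only to the relation $\a_1\cdots\a_r=1$. Since $\t_1\cdots\t_r=1$ by hypothesis, the assignment $\a_i\mapsto\t_i$ extends to a homomorphism $\r\colon\pi_1(U,b_0)\to S_n$, and because $G=\langle\t_1,\dots,\t_r\rangle$ is transitive the associated permutation representation on $\{1,\dots,n\}$ is transitive. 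By the Galois correspondence for covering spaces, the subgroup $\r^{-1}(\mathrm{Stab}(1))$ determines a connected $n$-sheeted covering $p\colon V\to U$ of topological surfaces whose monodromy along $\a_i$ is $\t_i$.

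Next I would upgrade $V$ to a Riemann surface. Since $p$ is a local homeomorphism, pulling back the complex structure of $U$ makes $V$ a (non-compact) Riemann surface with $p$ holomorphic. The substantive point is that this extends across the punctures: around each $b_i$ pick a coordinate disk $D_i$ with $D_i^{\ast}:=D_i\setminus\{b_i\}$ so small that $p^{-1}(D_i^{\ast})$ is a disjoint union of punctured disks, one for each cycle of $\t_i$, with $p$ on a cycle of length $k$ analytically equivalent to $w\mapsto w^k$. Filling in one point per punctured disk and declaring $p$ to be $w\mapsto w^k$ there yields a Riemann surface $Y\supseteq V$ together with a holomorphic extension $\phi'\colon Y\to P^1\C$ of $p$; the map $\phi'$ is proper with finite fibres onto the compact surface $P^1\C$, so $Y$ is compact, and it is connected because $V$ is. By construction the branch locus of $\phi'$ is contained in $B$, and the ramification indices over $b_i$ are exactly the cycle lengths of $\t_i$.

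It remains to identify the invariants. Restricting the monodromy of $\phi'$ to $U$ recovers $\r$, so $\Mon(Y,\phi')=G$ and the genus $g$ system induced by $\phi'$ with respect to the chosen $\a_i$ is precisely $(\t_1,\dots,\t_r)$; in particular the total ramification of $\phi'$ equals $\sum_{i}\ind(\t_i)$, since $\sum_{P\mapsto b_i}(e_P-1)=n-(\text{number of cycles of }\t_i)=\ind(\t_i)$. The Riemann--Hurwitz formula then gives
$$2g(Y)-2=n(2\cdot 0-2)+\sum_{i=1}^r\ind(\t_i)=-2n+2(n+g-1)=2g-2,$$
so $g(Y)=g$ and $(Y,\phi')$ is the required realization.

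The one genuinely non-formal step is the passage across the punctures — equivalently, the assertion that the pulled-back complex structure on $V$ extends to a complex structure on the compactification making $\phi'$ holomorphic. This rests on the classification of finite connected coverings of a punctured disk as power maps, together with a removable-singularity argument, and is precisely the analytic core that earns ``Riemann Existence'' the status of a theorem; in a full write-up one would invoke a standard reference for this. Everything else is bookkeeping with covering spaces and the Riemann--Hurwitz formula.
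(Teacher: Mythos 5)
Your argument is the standard (and correct) proof of the Riemann Existence Theorem: build the unramified degree-$n$ cover of $P^1\C\setminus B$ from the monodromy homomorphism $\a_i\mapsto\t_i$, pull back the complex structure, fill in the punctures via the local model $w\mapsto w^k$, and verify the genus with Riemann--Hurwitz. The paper itself offers no proof of this statement --- it is quoted as a classical result (the relevant reference being V\"olklein's book \cite{V}) --- so there is nothing to compare against; your write-up is a faithful sketch of the textbook argument, and you correctly isolate the only non-formal step (extending the complex structure across the punctures). One small caveat worth adding: for the branch point set of $\phi'$ to be exactly $B$ rather than merely contained in it, each $\t_i$ must be nontrivial; otherwise $b_i$ is unramified and drops out of the branch locus.
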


\begin{defn}
Two covers $(Y_i,\phi_i)$, $i = 1,2$ are equivalent if there exist holomorphic maps $\xi_1:Y_1\longrightarrow Y_2$ 
and 
$\xi_2:Y_2\longrightarrow Y_1$ which are inverses of one another,
such that 
$\phi_1 = \xi_1\circ\phi_2$ and $\phi_2 = \xi_2\circ\phi_1$.
\end{defn}

The Artin braid group acts via automorphisms on $\pi_1(P^1\C \setminus B,b_0)$. We have that 
all sets of canonical generators of $\pi_1(P^1\C \setminus B,b_0)$ lie in the same braid orbit.
Also the group $G$ acts via diagonal conjugation on genus $g$ generating sets. The 
diagonal and braiding actions on genus $g$ generating sets commute and preserve equivalence 
of covers; that is, if two genus $g$ generating sets lie in the same orbit under either 
the braid or diagonal conjugation action, then the corresponding covers given by Riemann's Existence
Theorem are equivalent. We call two genus $g$ generating systems {\it braid equivalent} if they 
are in the same orbit under the group generated by the braid action and diagonal conjugation. 
We have the following result, see for example \cite{V}, Proposition 10.14.

\begin{thm}
Two covers are equivalent if and only if the corresponding genus $g$ systems 
are braid equivalent.
\end{thm}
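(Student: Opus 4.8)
The plan is to prove the two implications separately, in each case first passing to the unbranched covers over $P^1\C \setminus B$, then accounting for the finitely many branch points, and finally absorbing the choices of basepoint, of canonical generators, and of fibre labellings into the braid action and diagonal conjugation. I begin with the implication that equivalent covers have braid equivalent systems. Suppose $\xi_1\colon Y_1\to Y_2$ and $\xi_2\colon Y_2\to Y_1$ are mutually inverse holomorphic maps intertwining $\phi_1$ and $\phi_2$. Then for every $x\in P^1\C$ the map $\xi_1$ restricts to a bijection $\phi_1^{-1}(x)\to\phi_2^{-1}(x)$, so $\phi_1$ and $\phi_2$ have the same degree $n$ and the same branch locus $B$. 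Fix $b_0\notin B$ and canonical generators $\a_1,\dots,\a_r$ of $\pi_1(P^1\C\setminus B,b_0)$. On the unbranched covers $\phi_i^{-1}(P^1\C\setminus B)$ the maps $\xi_i$ are morphisms of covering spaces, and by naturality of path lifting the induced bijection $\phi_1^{-1}(b_0)\to\phi_2^{-1}(b_0)$ conjugates the monodromy representation of $\phi_1$ into that of $\phi_2$. Transporting a labelling of $\phi_1^{-1}(b_0)$ to $\phi_2^{-1}(b_0)$ along $\xi_1$ therefore makes the two tuples $(\s_1,\dots,\s_r)$ in $S_n$ literally equal; any other admissible labelling of the second fibre changes the tuple by a fixed permutation, i.e.\ by diagonal conjugation, and — using the fact recorded above that all systems of canonical generators lie in a single braid orbit — any other choice of canonical generators changes it only by a braid. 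Hence the two genus $g$ systems are braid equivalent.

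For the converse I would invoke the remark already established in this section, namely that applying a braid or a diagonal conjugation to the genus $g$ system of a cover yields the genus $g$ system of an \emph{equivalent} cover. This reduces the problem to showing that two covers whose genus $g$ systems are literally equal (relative to the same $b_0$, the same $\a_i$, and the same fibre labellings) are equivalent. Equality of the systems means that the monodromy homomorphisms $\pi_1(P^1\C\setminus B,b_0)\to S_n$ attached to $\phi_1$ and $\phi_2$ coincide; by the classification of finite covering spaces of $P^1\C\setminus B$ by $\pi_1$-sets there is then an isomorphism of covering spaces $\phi_1^{-1}(P^1\C\setminus B)\to\phi_2^{-1}(P^1\C\setminus B)$ over $P^1\C\setminus B$, which is automatically biholomorphic since it is a local homeomorphism compatible with the holomorphic maps $\phi_i$. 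One then extends this isomorphism over the finitely many points of $B$: in suitable discs about a branch point and about each point above it, $\phi_i$ has the normal form $z\mapsto z^e$, the isomorphism extends continuously across the punctures, and hence holomorphically by the Riemann removable singularity theorem, producing the required mutually inverse holomorphic maps $\xi_1,\xi_2$ intertwining $\phi_1$ and $\phi_2$.

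The step I expect to be the main obstacle is this last extension across $B$ and the verification that the extended map is biholomorphic: this is exactly the ``fully faithful'' half of Riemann's Existence Theorem, and it is where one genuinely uses that the $Y_i$ are compact Riemann surfaces — equivalently, the normalizations of $P^1\C$ in the relevant function-field extensions — rather than arbitrary analytic spaces. I would either carry out the local $z\mapsto z^e$ computation explicitly or cite \cite{V}, Proposition 10.14. The remaining work — checking that the diagonal and braiding ambiguities in basepoint, canonical generators, and fibre labelling interact as claimed, and that each elementary braid move corresponds to an equivalence of covers — is routine given the material already assembled above.
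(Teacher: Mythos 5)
The paper gives no proof of this statement at all: it simply records it as a known result and points to \cite{V}, Proposition 10.14. Your sketch is the standard argument that lies behind that citation, and it is essentially correct: the forward direction via naturality of path lifting (an equivalence of covers conjugates the two monodromy representations, and the ambiguities in fibre labelling and in the choice of canonical generators are exactly diagonal conjugation and the braid action), and the converse via the equivalence of categories between finite covers of $P^1\C\setminus B$ and finite $\pi_1$-sets, followed by the local $z\mapsto z^e$ normal form and removable singularities to fill in the points over $B$. Two small caveats. First, your reduction of the converse to the case of literally equal tuples invokes the paper's preceding remark that braid moves and diagonal conjugation send a system to the system of an equivalent cover; that remark is itself asserted without proof, so a genuinely self-contained treatment would still have to verify that each generator $Q_i$ is realized by a self-homeomorphism of $P^1\C\setminus B$ permuting the canonical generators accordingly (equivalently, that any two canonical generating systems are braid-related) --- you flag this as routine, which is defensible but is where the real geometric content of the ``if'' direction sits. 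Second, in the forward direction one should note explicitly that equivalence forces the two covers to have the same degree and the same branch locus, so that the comparison of monodromy representations takes place in a single group $\pi_1(P^1\C\setminus B,b_0)$; you do say this. With those points understood, your proposal is a correct expansion of the result the paper leaves to the literature.
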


Suppose now that $(\tau_1,\dots,\tau_r)$ is a primitive genus $g$ system of $S_n$. Express each 
$\tau_i$ as a
product of a minimal number of transpositions; i.e. $\tau_i := \prod_j \s_{i,j}$. The 
system $(\s_{1,1},\dots,\s_{r,s})$ is a primitive genus $g$ system generating $S_n$ consisting of 
precisely $2(n+g-1)$ transpositions. By a famous result of Clebsch, see Lemma 10.15 in \cite{V},
any two primitive genus $g$ systems of $S_n$ are braid equivalent. Thus we see that every genus $g$  
system can be obtained from one of $S_n$ which consists entirely of transpositions.

Thus, generically we expect primitive genus $g$ systems in $S_n$ to generate either $A_n$ or $S_n$.
We define $P{\mathcal E}^*(g)_{n,r}$ to be the braid equivalence classes of 
genus $g$ systems $(\tau_1,\dots,\tau_r)$ in $S_n$ such that $G:=\langle \tau_1,\dots,\tau_r\rangle$
is a primitive subgroup of $S_n$ with $A_n \cap G \neq A_n$. We also define $G{\mathcal E}^*(g)_{n,r}$
to be the conjugacy classes of primitive subgroups of $S_n$ which are generated by a member of $P{\mathcal E}^*(g)_{n,r}$.

We also define $$P{\mathcal E}^*(g) := \cup_{(n,r) \in \N^2} P{\mathcal E}^*(g)_{n,r},$$
and similarly $$G{\mathcal E}^*(g) := \cup_{(n,r) \in \N^2} G{\mathcal E}^*(g)_{n,r}.$$

We note that the composition factors of elements of $G{\mathcal E}^*(g)$ are elments of ${\mathcal E}^*(g)$.

While our ultimate goal is to determine $P{\mathcal E}^*(g)$ where $g \leq 2$, we focus here
on the case $g=0$. 

Our assumption that $G = \Mon(X,\phi)$ acts primitively on $F$ is a strong one and allows us to organize our 
analysis along the lines of the Aschbacher-O'Nan-Scott Theorem exactly as was done in the original paper of 
Guralnick and Thompson \cite{GT}. We recall the statement of the Aschbacher-O'Nan-Scott Theorem from \cite{GT} 

\begin{thm} Suppose $G$ is a finite group and $H$ is a maximal subgroup of $G$ such that 
$$\bigcap_{g \in G} H^g = 1.$$
Let $Q$ be a minimal normal subgroup of $G$, let $L$ be a minimal normal subgroup of $Q$, and 
let $\Delta = \{L=L_1, L_2, \dots, L_t\} $ be the set of $G$-conjugates of $L$. Then $G= HQ$
and precisely one of the following holds: 
\begin{itemize}
\item[(A)] $L$ is of prime order $p$.
\item[(B)] $F^*(G) = Q \times R$ where $Q \cong R$ and $H \cap Q = 1$.
\item[(C1)] $F^*(G) = Q$ is nonabelian, $H \cap Q = 1$.
\item[(C2)] $F^*(G) = Q$ is nonabelian, $H \cap Q \neq 1 = L \cap H$.
\item[(C3)] $F^*(G) = Q$ is nonabelian, $H \cap Q = H_1 \times \dots \times H_t$, \\
where $H_i = H \cap L_i \neq 1$, $1 \leq i \leq t.$
\end{itemize} 
\end{thm}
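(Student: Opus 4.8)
The plan is to run the standard socle-theoretic analysis. I would first dispose of the elementary reductions. Since $\bigcap_{g\in G}H^g=1$, the subgroup $H$ contains no nontrivial normal subgroup of $G$; in particular $Q\not\le H$, so $H$ maximal and $HQ>H$ give $G=HQ$. As each $L_i$ is a direct factor, hence normal, in $Q$, conjugation by $Q$ fixes each $L_i$, so $H$ is transitive on $\Delta$ just as $G$ is. I would then invoke the classical description of minimal normal subgroups: $Q=L_1\times\cdots\times L_t$ with the $L_i$ pairwise isomorphic simple groups permuted transitively by $G$. Thus $L$ is simple; if $|L|=p$ is prime then $Q$ is elementary abelian and we are in case (A), so from now on $L$ is nonabelian simple.

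The decisive fork is whether $C_G(Q)=1$. Suppose not; then $C_G(Q)$, a nontrivial normal subgroup of $G$, is not contained in $H$ and contains a minimal normal subgroup $R$ of $G$. Since $R$ centralizes $Q$ it normalizes $H\cap Q$, and so does $H$; as $R\not\le H$ and $H$ is maximal, $\langle H,R\rangle=G$, whence $H\cap Q\trianglelefteq G$, and minimality of $Q$ with $Q\not\le H$ forces $H\cap Q=1$. The same argument, with the roles of $Q$ and $R$ exchanged, gives $R\cap H=1$, so $R$ is semiregular on $G/H$; its orbits form a $G$-invariant block system, and primitivity forces $R$ to be transitive, hence regular. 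Since also $H\cap Q=1$, the group $Q$ is regular too, so $R$, lying in the centralizer of the regular normal subgroup $Q$ (a regular copy of $Q$), must coincide with it. Hence $R=C_G(Q)\cong Q$ is nonabelian, $R\cap Q=1$, and $F^*(G)=QR=Q\times R$: this is case (B).

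Now suppose $C_G(Q)=1$. Then $F(G)$ is a nilpotent normal subgroup with $F(G)\cap Q=1$, so $F(G)$ centralizes $Q$ and is therefore trivial; likewise any component of $G$ other than one of the $L_i$ centralizes $Q$ and so is trivial. Hence $F^*(G)=Q$ is nonabelian and is the unique minimal normal subgroup of $G$. It remains to read off (C1)--(C3) from the $H$-invariant subgroup $H\cap Q$ of $L_1\times\cdots\times L_t$. If $H\cap Q=1$ we are in (C1). Otherwise I would analyse the projections $\pi_i(H\cap Q)\le L_i$ using the structure of subdirect products of nonabelian simple groups (Scott's Lemma), the transitivity of $H$ on $\Delta$, and $H$-invariance of $H\cap Q$, to see that $H\cap Q$ is the direct product, over the blocks of an $H$-invariant partition of $\{1,\dots,t\}$, of the full diagonal subgroups of those blocks. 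Maximality of $H$ and triviality of the core then leave exactly two outcomes: either $H\cap Q$ is a nontrivial diagonal-type subgroup with $L\cap H=1$, which is (C2); or the partition consists of singletons, each $H_i:=H\cap L_i$ is proper and nontrivial, and $H\cap Q=H_1\times\cdots\times H_t$, which is (C3).

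The step I expect to be the main obstacle is this last one, the precise determination of $H\cap Q$. It rests on the structure theory of subdirect subgroups of a product of nonabelian simple groups, combined with a careful use of the maximality of $H$ through the correspondence $K\mapsto K\cap Q$ between overgroups $H\le K\le G$ and the $H$-invariant subgroups $D$ with $H\cap Q\le D\le Q$ (so that no such $D$ lies strictly between $H\cap Q$ and $Q$), which must be exploited to exclude every intermediate configuration and to pin down the exact direct-product decomposition in (C3). A secondary but genuine point is to verify that (A), (B), (C1)--(C3) are exhaustive and mutually exclusive, and in case (B) to establish $F^*(G)=Q\times R$ exactly, rather than merely $Q\times R\le F^*(G)$.
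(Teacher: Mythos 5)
This statement is the Aschbacher--O'Nan--Scott theorem, which the paper does not prove at all: it is quoted verbatim from Guralnick--Thompson \cite{GT}, so there is no in-paper proof to compare against. Judged on its own terms, your sketch follows the standard socle-theoretic route, and the parts you actually carry out are correct: $G=HQ$ from maximality and trivial core; $H$ transitive on $\Delta$ because $Q$ normalizes each $L_i$; the fork on $C_G(Q)$; in the case $C_G(Q)\neq 1$ the derivation of $H\cap Q=1=H\cap R$ via $G$-invariance of these intersections, the regularity of $Q$ and $R$, and the identification of $R$ with the (regular) centralizer of $Q$, giving $R=C_G(Q)\cong Q$ and case (B); and in the case $C_G(Q)=1$ the identification $F^*(G)=Q$. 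Your closing observation that the overgroups of $H$ correspond exactly to the $H$-invariant subgroups $D$ with $H\cap Q\le D\le Q$ (via $K=H(K\cap Q)$, Dedekind, and $|HD|=|H||D|/|H\cap Q|$) is also right and is the correct lever for the final step.

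The one genuine gap is the one you flag yourself: the trichotomy (C1)/(C2)/(C3) is not actually established. What needs proof is that if $H\cap Q\neq 1$ and $H\cap L\neq 1$, then necessarily $H\cap Q=\prod_i(H\cap L_i)$; equivalently, that the subdirect-product (Scott's Lemma) analysis of the $N_H(L_i)$-invariant projections $\pi_i(H\cap Q)$, combined with $H$-transitivity on $\Delta$ and the maximality correspondence above, excludes every configuration in which $H\cap Q$ is a nontrivial ``partial diagonal'' meeting some $L_i$ nontrivially. As written, your sketch asserts the conclusion of that analysis rather than performing it, so exhaustiveness of (C2) versus (C3) is unproved. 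This is exactly the nonelementary content of the theorem, and to complete the argument you would need to import or reprove the relevant result on $H$-invariant subdirect subgroups of $L_1\times\cdots\times L_t$ (as in Aschbacher--Scott or \cite{GT}). Everything else in your proposal is sound.
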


The members of $G{\mathcal E}^*(0)$ that arise in case (C2) were determined by Aschbacher \cite{A}. In all such 
examples $Q = A_5 \times A_5$.  Shih \cite{Shi} showed that no elements of $G{\mathcal E}^*(0)$ arise in case (B) and 
Guralnick and Thompson \cite{GT} showed the same in case (C1). Guralnick and 
Neubauer \cite{GN} showed that the elements of $G{\mathcal E}^*(0)$ arising in case (C3) all have $t \leq 5$. 
This was strengthened by Guralnick \cite{Gs} to $t \leq 4$ and the action of $L_i$ on the cosets of $H_i$ is a member of 
$G{\mathcal E}^*(0)$. In case (C3), where $L_i$ is of Lie type of rank one, all elements of $G{\mathcal E}^*(0)$ and 
$G{\mathcal E}^*(1)$ were determined 
by  Frohardt, Guralnick, and Magaard \cite{FGM1}, moreover they show that $t \leq 2$. In \cite{FGM2}  Frohardt, 
Guralnick, and Magaard showed that if
$t=1$, $L_i$ is classical and $L_i/H_i$ is a point action, then $n = [L_i:H_i] \leq 10,000$. That result together with 
the results of Aschbacher, Guralnick and Magaard \cite{AGM} show that if $t=1$ and $L_i$ is classical then 
$[L_i:H_i] \leq 10,000$. In \cite{GS} Guralnick and Shareshian show tha $G \in G{\mathcal E}^*(0)_{n,r} = \empty$ if $r \geq 9$. 
Moreover they show that if $G \in G{\mathcal E}^*(0)_{n,r}$ with $F^*(G)$ is alternating of degree $d < n$, then
and $r \geq 4$ unless $|B| = 5$ and $n = d(d-1)/2$.

So for the case where $F^*(G)$ is a direct product of nonabelian simple groups a complete picture of the elements of $G{\mathcal E}^*(0)$
is emerging.\\

In case (A) above, the affine case, we have that $F^*(G)$ is elementary abelian and it acts regularly on $F$.
Case (A) was first considered by Guralnick and Thompson \cite{GT}. Their results were then strengthened by Neubauer \cite{N92}.
After that, case (A) has not received much attention, which is in part due to its computational complexity.

The starting point for our investigations is Theorem 1.4 of Neubauer \cite{N92}.

\begin{thm} [Neubauer] If $F^*(G)$ is elementary abelian of order $p^e$ and $X = P^1\C$, then one of the following is true:
\begin{enumerate}
\item $G'' = 1$ and $1 \leq e \leq 2$
\item $p = 2$ and $2 \leq e \leq 8$,
\item $p = 3$ and $2 \leq e \leq 4$,
\item $p = 5$ and $ 2 \leq e  \leq 3$,
\item $p = 7$ or $11 $ and $e = 2$.

\end{enumerate} 
\end{thm}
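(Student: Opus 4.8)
The plan is to reduce the statement to a finite group-theoretic computation combined with a handful of structural lemmas bounding the dimension $e$. First I would invoke the Riemann--Hurwitz formula together with the primitivity hypothesis: if $G$ is affine primitive with $F^*(G) = N$ elementary abelian of order $p^e$ and point stabilizer $H = G_0$, then $H$ acts irreducibly on $N = \F_p^e$, and $G = N \rtimes H$ acts on $F = N$ with $n = p^e$ points. For a genus zero system $(\tau_1,\dots,\tau_r)$ generating $G$ we have $\sum_{i=1}^r \ind(\tau_i) = 2(p^e - 1)$. The key elementary observation is that for $\tau \in G$ acting on $N$, the index $\ind(\tau)$ equals $p^e - (\text{number of }\tau\text{-orbits on }N)$, and the number of fixed points of $\tau$ on $N$ is either $0$ (if $\tau$ has a nontrivial translation part not lying in the image of $g-1$ where $g$ is the linear part) or a power $p^{d(\tau)}$ where $p^{d(\tau)} = |\ker(g - 1)|$ on $N$. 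This lets one bound $\ind(\tau)$ from below by roughly $p^e(1 - 1/\ell)$ where $\ell$ is the order of a relevant prime-order element, and hence bound $\sum(\ind(\tau_i))$ from below; comparing with $2(p^e-1)$ forces $r$ to be small and, crucially, forces the linear parts to have large fixed spaces, i.e.\ to be ``small'' elements of $\GL_e(p)$.

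The heart of the argument is then the dichotomy $G'' = 1$ versus $G'' \neq 1$. When $G'' = 1$, the point stabilizer $H$ is metabelian and irreducible on $\F_p^e$; irreducible metabelian linear groups have a normal abelian subgroup acting (after extension of scalars) as scalars, which forces $e \le 2$ by a Clifford-theory / Schur's lemma argument (the faithful irreducible module for an abelian group over $\F_p$ has dimension equal to the degree of a field extension, and metabelian forces this down). This gives case (1). When $G'' \neq 1$, I would use the genus zero constraint to bound $|H|$, or rather to bound the composition factors of $H$: since the index contributions are large, only finitely many ramification types are possible for each $(p,e)$, and a genus zero group must be generated by elements each of which moves few enough points. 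The standard tool here (going back to Guralnick--Thompson and Neubauer) is that a genus zero permutation group of degree $n$ has a generating tuple whose indices sum to $2(n-1)$, which via the ``$1/2$-bound'' or Scott's inequality restricts $|G|$ relative to $n$; applied to the affine case this bounds $|H| = |G|/p^e$ polynomially in $p^e$, and then one runs through the irreducible subgroups of $\GL_e(p)$ of bounded order. Aschbacher--O'Nan--Scott plus the classification of such irreducible linear groups (or Hering's theorem on transitive linear groups, and its relatives) yields that $p^e \in \{2^e : e \le 8\} \cup \{3^e : e \le 4\} \cup \{5^e : e \le 3\} \cup \{7^2, 11^2\}$, which is exactly cases (2)--(5).

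Concretely, the steps in order: (i) set up $n = p^e$, $G = N \rtimes H$, and the index formula $\ind(\tau) = p^e - (\#\text{orbits of }\tau)$; (ii) derive a lower bound for each $\ind(\tau_i)$ in terms of the order of the linear part of $\tau_i$, and combine with $\sum \ind(\tau_i) = 2(p^e-1)$ to bound $r$ and the orders of the linear parts; (iii) handle $G'' = 1$ by the metabelian irreducible linear group argument to get $e \le 2$; (iv) for $G'' \neq 1$, bound $|H|$ in terms of $p^e$ using the genus zero inequality, reducing to a finite list of candidate irreducible $H \le \GL_e(p)$; (v) eliminate the surviving candidates by checking directly whether a genus zero system exists, invoking Hering's classification of irreducible subgroups of $\GL_e(p)$ containing a transitive-linear-type configuration and the earlier work of Guralnick--Thompson and Neubauer to close off the large cases. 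The main obstacle I expect is step (iv)--(v): turning the crude order bound on $|H|$ into the sharp bounds $e \le 8$ for $p = 2$, $e \le 4$ for $p = 3$, etc., which requires careful use of the structure of irreducible linear groups over small fields rather than a soft counting argument, since the naive bound from $\sum \ind(\tau_i) = 2(p^e-1)$ alone is not tight enough near the boundary cases (e.g.\ distinguishing $2^8$ from $2^9$).
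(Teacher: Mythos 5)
First, a point of reference: the paper does not prove this statement at all. It is quoted as Theorem 1.4 of Neubauer \cite{N92} and used as the starting point for the computations, so there is no in-paper proof to compare yours against; the comparison has to be with Neubauer's actual argument. Your overall architecture --- Riemann--Hurwitz index counting for affine permutations, the dichotomy $G''=1$ versus $G''\neq 1$, bounding $|H|$ via the genus zero inequality and then working through irreducible subgroups of $\GL_e(p)$ --- is the right shape and does match the Guralnick--Thompson/Neubauer line of attack. Your index formula $\ind(\tau)=p^e-\#(\mbox{orbits of }\tau)$ and the fixed-point count $|{\rm Fix}(\tau)|\in\{0,\,p^{\dim\ker(g-1)}\}$ for an affine map with linear part $g$ are both correct.

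There are, however, two genuine problems. The first is an outright error in step (iii): you claim that $G''=1$ forces $e\le 2$ by a purely algebraic Clifford/Schur argument about irreducible metabelian linear groups. This is false. If $G=N\rtimes H$ is primitive affine with $G''=1$, then since $N\le G'$ and $H$ acts faithfully on $N$, the group $H$ is abelian, hence cyclic and contained in a Singer cycle of $\GL_e(p)$; such groups act irreducibly on $\F_p^e$ for \emph{every} $e$ (e.g. $G=\AGL(1,p^e)$ is metabelian, primitive, and affine for all $e$). So the bound $e\le 2$ in case (1) is a consequence of the genus zero condition, obtained by computing indices of elements in the Singer normalizer and showing $\sum_i\ind(\tau_i)=2(p^e-1)$ cannot be met when $e\ge 3$; no amount of module theory alone will produce it. The second problem is that steps (iv)--(v), which you yourself flag as the heart of the matter, are not carried out: the soft bound on $|H|$ coming from $\sum_i\ind(\tau_i)=2(p^e-1)$ is nowhere near sharp enough to yield the cutoffs $e\le 8$ for $p=2$, $e\le 4$ for $p=3$, $e\le 3$ for $p=5$, and $e=2$ for $p=7,11$; Neubauer gets these from a detailed analysis of minimal indices of elements of the relevant irreducible subgroups of $\GL_e(p)$, prime by prime. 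As written, your text is a plausible plan containing one false lemma and missing the decisive computation.
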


The groups $G$ with  $G'' = 1$ and $1 \leq e \leq 2$ are Frobenius groups and are well understood.
Thus we concentrate on the affine groups of degrees 
$$\{8,16,32,64,128,256,9,27,81,25,125,49,121\}.$$
Our results are recorded in the tables below. These tables were calculated in several steps which 
we will now outline.\\

\noindent{\bf Algorithm: Enumerating Primitive Genus Zero Systems of Affine Type} 
\begin{itemize}

 \item Look up the primitive affine groups $G$ of degree $p^e$ using the GAP function \\
{\tt AllPrimitiveGroups(DegreeOperation, $p^e$)}.
\item For every group $G$, calculate conjugacy class representatives and permutation indices.
\item Using the function {\tt RestrictedPartions}, calculate all possible ramification types
satisfying the genus zero condition of the Riemann-Hurwitz formula.
\item Let $V = \F_p^e = F^*(G)$. For each conjugacy class representative $x$ calculate 
$\dim_{V}(x)$ and use Scott's Theorem to eliminate those types from the previous step which can not 
possibly act irreducibly on $V$; i.e. can not generate a primitive group.
\item Calculate the character table of $G$ and discard those types for which the class structure 
constant is zero.
\item For each of the remaining types of length four or more use the old version of BRAID, if possible, or
else run our new algorithm. For tuples of length three determine orbits via double cosets.
\end{itemize}

A few remarks are in order. First of all, the use of Scott's theorem above is best done in conjunction 
with a process called translation \cite{FM}. In fact, translation was crucial in handling certain types arising 
in degrees $128$ and $256$. Secondly, using BRAID on types of length $3$ is meaningless as every pure braid orbit 
has length one. Instead, we can compute possible generating triples using double cosets of centralizers.

\begin{appendix}


\begin{table}[ht]
\caption{The Genus Zero Systems for Affine Primitive Groups of Degree 8}
\centering
\scalebox{0.75}{
\begin{tabular}{|l|l|l|l|l|l|l|}
\hline
group& ramification & \# of & largest  & ramification & \# of & largest\\
      & type       &   orbits      & orbit    &   type   & orbits   & orbit\\ \hline
      
$A\Gamma L(1,8)$ & (3B,3B,6B) & 2 & 1   & (3A,3B,7B)   & 1 & 1   \\ \hline

                 & (3A,3B,7A) & 1 & 1   &(3A,3A,6A)& 2 & 1  \\ \hline
$ASL(3,2)$         & (4B,3A,7B) & 2 & 1   &(4B,3A,7A) & 2 & 1  \\
                 &(4B,3A,6A) & 4 & 1   & (4B,3A,4C) & 2 & 1   \\
                  &(4B,4B,7B) & 1 & 1   &(4B,4B,7A) & 1 & 1   \\
                  &(4B,4B,6A) & 2 & 1   &(4B,4B,4C) & 4 & 1   \\
                 &(2C,4B,7B) & 1 & 1   & (2C,4B,7A)& 1 & 1   \\
                 &(2B,7B,7B)& 1 & 1   & (2B,7A,7A)& 1 & 1   \\
                  &(2B,6A,7B)& 1 & 1   & (2B,6A,7A)& 1 & 1   \\
                  &(2B,3A,3A,3A)& 1 & 120   &(2B,4C,7B)& 1 & 1   \\
               &(2B,4C,7A)& 1 & 1   &(2B,4B,3A,3A)& 1 & 84   \\
                 &(2B,4B,4B,3A)& 1 & 66   &(2B,4B,4B,4B)& 1 & 36   \\
                &(2B,2C,3A,3A)& 1 & 30   &(2B,2C,4B,3A)& 1 & 24   \\
                &(2B,2C,4B,4B)& 1 & 24   &(2B,2B,3A,7B)& 1 & 21   \\
                  &(2B,2B,3A,7A)& 1 & 21   &(2B,2B,3A,6A)& 1 & 30   \\
                   &(2B,2B,3A,4C)& 1 & 24   &(2B,2B,4B,7B)& 1 & 14   \\
                  &(2B,2B,4B,7A)& 1 & 14   &(2B,2B,4B,6A)& 1 & 24   \\
                    &(2B,2B,4B,4C)& 1 & 24   &(2B,2B,2C,7B)& 1 & 7   \\
                 &(2B,2B,2C,7A)& 1 & 7   &(2B,2B,2B,3A,3A)& 1 & 864   \\
                 &(2B,2B,2B,4B,3A)& 1 & 648   &(2B,2B,2B,4B,4B)& 1 & 456   \\
                  &(2B,2B,2B,2C,3A)& 1 & 216   &(2B,2B,2B,2C,4B)& 1 & 192   \\
                   &(2B,2B,2B,2B,7B)& 1 & 147   &(2B,2B,2B,2B,7A)& 1 & 147   \\
                      &(2B,2B,2B,2B,6A)& 1 & 216   &(2B,2B,2B,2B,4C)& 1 & 192   \\
                   &(2B,2B,2B,2B,2B,3A)& 1 & 6480   &(2B,2B,2B,2B,2B,4B)& 1 & 4800   \\
                   &(2B,2B,2B,2B,2B,2C)& 1 & 1680   &(2B,2B,2B,2B,2B,2B,2B) & 1 & 48960   \\ \hline      

\end{tabular}
}
\end{table}

\begin{table}[ht]
\caption{The Genus Zero Systems for Primitive Groups of Degree 25 and 125}
\centering
\scalebox{0.8}{
\begin{tabular}{|l|l|l|l|l|l|l|}
\hline
group& ramification & \# of & largest  & ramification & \# of & largest\\
      & type       &   orbits      & orbit    &   type   & orbits   & orbit\\ \hline

$5^2:3$ &  (3B,3B,3B) & 8 & 1   &(3A,3A,3A)& 8 & 1  \\ \hline

$5^2:6$ &  (2A,3B,6B) &4 & 1   & (2A,3A,6B) & 8 & 1   \\ \hline

$5^2:S_3$  &(2A,3A,10D)& 1 & 1   & (2A,3A,10C) & 1 & 1   \\
           & (2A,3A,10B) & 1 & 1   &(2A,3A,10A) & 1 & 1   \\ \hline
           
$5^2:D(2*6)$  & (2A,2B,2C,3A)   & 1 & 12 & & &  \\ \hline

$5^2:D(2*4):2$  & (2A,2C,2D,4C)& 1 & 1   & (2A,2C,2D,4A)  & 1 & 1 \\ \hline

$5^2:O+(2, 5)$  &  (2C,4F,8A) & 1 & 1   &(2C,4E,8B) & 1 & 1   \\ \hline

$5^2:((Q_8:3)'2)$ &  (2B,3B,12B) & 1 & 1   &(2B,3B,12A)& 1 & 1   \\
                   &  (2B,3A,12D) & 1 & 1   &(2B,3A,12C)& 1 & 1   \\ \hline
 $5^2:((Q_8:3)'4)$  &   (4F,3A,4E)  & 1 & 1   &   (4D,3A,4G)  & 1 & 1   \\ \hline
 
$\ASL (2, 5):2$      & (2B,3A,20D) & 1 & 1   &(2B,3A,20C)& 1 & 1   \\
                    & (2B,3A,20B) & 1 & 1   & (2B,3A,20A) & 1 & 1   \\ \hline      
                    
$5^3:4^2:S_3$ &  (2B,3A,8B) & 4 & 1   &(2B,3A,8A)& 4 & 1  \\ \hline
      
\end{tabular}
}
\end{table}

\begin{table}[ht]\label{table2}
\caption{The genus zero system of $\AGL (4,2)$ Part 1}
\centering
\scalebox{0.8}{
\begin{tabular}{|l|l|l|l|l|l|}
\hline
\multicolumn{6}{|l|}{Using BRAID} \\ \hline
ramification & \# of & largest  & ramification & \# of & largest\\
 type       &   orbits      & orbit    &   type   & orbits   & orbit\\ \hline
(2B,5A,15B) & 1 &1 & (2B,2B,3B,7B) & 1 & 7\\
(2B,5A,15A) & 1 &1 &  (2B,2B,3B,7A) & 1 &7 \\
(2B,5A,14B) & 1 &1 & (2B,2B,4B,5A) &  1& 80\\
(2B,5A,14A) & 1 &1 & (2B,2B,4B,6C) & 1 & 96\\
(2B,6C,15B) & 1 &1 & (2B,2B,6A,5A) & 1 & 120\\
(2B,6C,15A) & 1 &1 & (2B,2B,6A,6C) & 1 & 108\\
(2B,6C,14B) & 1 &1 & (2B,2D,4D,5A) & 1 & 90\\
(2B,6C,14A) & 1 &1 & (2B,2D,4D,6C) & 1 & 78\\
(2D,4F,15B)& 1 &1 & (2B,2D,3A,5A) & 1 & 60\\
(2D,4F,15A)& 1 &1 & (2B,2D,3A,6C) & 1 & 72\\ \hline
(2D,6A,15B) & 3 &1 & (2B,2B,2B,2D,5A) & 1 & 650\\
(2D,6A,15A) & 3 &1 & (2B,2B,2B,2D,6C) & 1 & 648\\
(2D,6A,14B) & 2 &1 & (4B,4B,4D) & 12 & 1\\
(2D,6A,14A) & 2 &1 & (6A,4B,4D) &  18 & 1\\
(2B,2D,2D,15B) & 1& 15& (6A,4B,4B) & 32 & 1\\
(2B,2D,2D,15A) & 1& 15& (6A,6A,4F) & 12 &1 \\
(2B,2D,2D,14B) & 1& 14 & (6A,6A,4B) & 52 &1 \\
(2B,2D,2D,14A) & 1& 14 & (6A,6A,6A) & 72 & 1\\
(4D,4F,5A) & 6 &1 & (2B,2D,4B,4F) & 1 & 96\\
(4D,4F,6C) & 4 &1 &  (2B,2D,4B,4B) & 1 & 216\\ \hline
(4D,3B,7B) & 1 &1 & (2B,2D,6A,4F) & 1 & 84\\
(4D,3B,7A) & 1 &1 & (2B,2D,6A,4B) & 1 & 312\\
(4D,4B,5A) & 6 &1 & (2B,2D,6A,6A) & 1 & 414\\
(4D,4B,6C) & 12&1 & (2B,4F,4D,3B) & 1 & 24\\
(4D,6A,5A) & 18& 1& (2B,3A,4D,3B) & 1 & 30\\
(4D,6A,6C) & 12& 1& (2B,3A,3A,3B) & 1 & 24\\
(3A,4F,5A) & 2 & 1& (2D,2D,4D,4F) & 1 & 88\\
(3A,4F,6C) & 4 & 1 & (2D,2D,4D,4B) & 1  &192\\
(3A,6A,5A) & 10& 1&  (2D,2D,4D,6A) &  1 &336\\
(3A,6A,6C) & 12& 1&  (2D,2D,3A,4F) &  1 & 56\\
(2B,2B,4F,5A) & 1& 30& (2D,2D,3A,6A) &  1 & 216\\
(2B,2B,4F,6C) & 1& 30& (2B,2B,2B,4D,3B)  & 1 & 610\\ 
(2B,2B,2B,3A,3B) & 1& 216 &(2B,2B,2D,2D,4F) & 1& 576\\ \hline
\end{tabular}
}
\end{table}

\begin{table}[ht]\label{table3}
\caption{The genus zero system of $\AGL (4,2)$ Part 2}
\centering
\scalebox{0.8}{
\begin{tabular}{|l|l|l|l|l|l|l|l|}
\hline
\multicolumn{8}{|l|}{Using Matching Algorithm} \\ \hline
ramification & \# of & \# of  & orbit & ramification & \# of & \# of & orbit\\
 type  &  nodes  & orbits &  length&  type & nodes & orbits & length\\ \hline
(2B,2B,2D,2D,6A) & 170 & 1 & 2448 & (2B,2B,2B,2B,2B,3B) & 107 & 1& 1782 \\
(2B,2D,2D,2D,4D) & 63& 1 & 1920 & (2B,2B,2D,2D,4B)  & 151& 1 & 1920 \\
(2B,2B,2B,2D,2D,2D)& 903& 1 & 15168& (2B,2D,2D,2D,3A) & 56 & 1& 1512\\ \hline
\end{tabular}
}
\end{table}

\begin{table}[ht]\label{table4}
\caption{{\small Genus zero systems for other primitive affine groups of degree 16}}
\centering
\scalebox{0.7}{
\begin{tabular}{|l|l|l|l|l|l|l|l|}
\hline
Group & ramification & \# of & largest & Group & ramification & \# of & largest \\
      & type         & orbits & orbit   &      & type         & orbits & orbit  \\
\hline
$2^4:D(2*5)$ &  (2A,5B,4C)   & 1 & 1  &  $2^4.A_6 $ & (2B,5B,5B) & 4 & 1 \\
             &  (2A,5B,4B)   & 1 & 1  &              & (2B,5A,5A) & 4 & 1 \\
             &  (2A,5B,4A)   & 1 & 1  &              & (3A,3B,5B) & 2 & 1 \\
             &  (2A,5A,4C)   & 1 & 1  &              & (3A,3B,5A) & 2 & 1 \\
             &  (2A,5A,4B)   & 1 & 1  &              & (2B,2B,2B,5B)& 2 & 30 \\
             &  (2A,5A,4A)   & 1 & 1  &              & (2B,2B,2B,5A)& 2 & 30 \\
             &  (2A,2A,2A,4C)& 1 & 12 &              & (2B,2B,3A,3B)& 1 & 36 \\
             &  (2A,2A,2A,4B)& 1 & 12 &              & (2B,2B,2B,2B,2B)& 2 & 864 \\
             &  (2A,2A,2A,4A)& 1 & 12 &             &                 & & \\ \hline

  $(A_4\times A_4):2$  & (2A,6B,6C)   & 1 & 1 &       $2^4:S_5$ & (2C,5A,12A) & 1 & 1 \\
                           & (2A,6A,6D)   & 1 & 1 &       & (2C,5A,8A) & 1 & 1 \\
                             & (2A,2A,3E,3A)& 1 & 1 &        &(2E,6C,12A) & 1 & 1 \\
                              & (2A,2A,3D,3B)& 1 & 1   &    & (2E,6C,8A) & 1 & 1 \\ \cline{1-4}
   $(2^4:5).4 $  &(2A,4B,8B) & 1 & 1              &         & (2E,4E,12A)  & 1 & 1 \\
                  &(2A,4A,8A) & 1 & 1              &         &(2C,2E,2E,12A)& 1 & 6 \\ \cline{1-4}
$2^4:S_3\times S_3$  & (2E,6B,6C)  & 3 & 1   &               & (2C,2E,2E,8A)& 1 & 8 \\
                       & (2D,2E,2E,6C)& 1 & 12 &              &  (2D,6C,5A)& 3 & 1 \\
                         & (2C,2E,2E,6B)& 1 & 12 &           & (2D,4E,5A)& 3 & 1 \\
                      & (2C,2D,2E,6A)& 1 & 3  &               & (2C,2E,2D,5A)& 1 & 15 \\
                       & (2C,2D,2E,2E,2E)& 1& 48 &            & (2E,2E,2D,6C)& 1 & 18 \\ \cline{1-4}
$2^4.3^2:4 $            & (2C,4D,8B)& 2 & 1  &                & (2E,2E,2D,4E) & 1 & 24 \\
                    & (2C,4C,8A)& 2 & 1    &                   & (2C,2E,2E,2E,2D)& 1 & 120 \\
                       & (3A,4C,4D)& 3 & 1   &                 & & &                       \\ \hline

 $(S_4\times S_4):2$ & (2E,6B,8A)& 1 & 1       &               $2^4:A_5$ & (2C,5A,5B)& 3 & 1 \\
                      & (2C,4F,12A)& 1 & 1      &                        &(2C,6C,5B)& 1 & 1 \\
                       & (2C,6C,8A)& 1 & 1       &                        &(2C,6C,5A)& 1 & 1 \\
                       & (2E,2C,2D,8A)& 1 & 4    &                          &(2C,6B,5B)& 1 & 1 \\
                        & (2E,2C,2C,12A)& 1 & 2   &                     &(2C,6B,5A)& 1 & 1 \\
                        & (2F,4F,6B)& 3 & 1       &                  &(2C,6A,5B)& 1 & 1 \\
                         & (2E,2C,2F,6B)& 1 & 6    &                  &(2C,6A,5A)& 1 & 1 \\
                       & (2E,2C,3A,4F)& 1 & 6    &                    &(2C,2C,2C,5B)& 1 & 30 \\
                        & (2C,2D,2F,4F)& 1 & 12   &                &(2C,2C,2C,5A)& 1 & 30 \\
                       & (2C,2C,2F,6C)& 1 & 6    &                &(2C,2C,2C,6C)& 1 & 18 \\representatives for the prenodes.
                      & (2E,2E,2C,2C,3A)& 1 & 12 &                &(2C,2C,2C,6B)& 1 & 18 \\
                       & (2E,2C,2C,2D,2F)& 1 & 24 &            &(2C,2C,2C,6A)& 1 & 18 \\
                       & & & &                               &(2C,2C,2C,2C,2C)& 1 & 576 \\ \hline

$A\Gamma L(2,4) $   & (2C,4C,5A) & 1 & 1        &               $2^4.A_7$  & (2B,4A,14B)& 2 & 1 \\
                        &(2C,4C,15A) & 1 & 1        &                          & (2B,4A,14A)& 2 & 1 \\
                       &(3B,4C,6C) & 4 & 1          &                          & (2B,7B,6A)& 2 & 1 \\
                      & (2B,2C,3B,4C) & 1 & 20     &                            & (2B,7A,6A)& 2 & 1 \\  \cline{1-4}
$\ASL (2,4):2 $     & (2C,5A,6A)& 2 & 1                &                       &  (2B,5A,7B)& 2 & 1 \\
                      &(2B,6A,6A)& 2 & 1               &                         & (2B,5A,7A)& 2 & 1 \\
                     &(2B,2C,2C,5A)& 1 & 10            &                         &(3B,3A,7B)& 1 & 1 \\
                     &(2B,2B,2C,6A)& 1 & 12            &                    & (3B,3A,7A)& 1 & 1 \\
                   &(4A,4A,3A)& 2 & 1               &                        &(3B,4A,6A)& 6 & 1 \\
                   &(2B,2B,2B,2C,2C) & 1 & 80       &                       & (3B,4A,5A)& 10 & 1 \\  \cline{1-4}
$2^4.S_6 $        & (2B,2B,3B,5A) & 1 & 10          &                       &(2B,2B,2B,7B)& 2 & 21 \\
                    & (6B,4D,3B)   & 2 & 1            &                       &(2B,2B,2B,7A)& 2 & 21 \\
                  & (6B,6B,3B)   & 6 & 1           &                          &(4A,4A,4A)& 24 & 1 \\
                   & (2B,2D,4D,3B) & 1 & 12         &                           &(2B,2B,3B,4A)& 1 & 192 \\
                    & (2B,2D,6B,3B) & 1 & 24         &                          & & & \\
                    & (2B,2B,2D,2D,3B)& 1 & 108      &                        & & & \\ \hline

\end{tabular}
}
\end{table}

\begin{table}[ht]
\caption{The Genus Zero Systems for Affine Primitive Groups of Degree 32}
\centering
\scalebox{0.8}{
\begin{tabular}{|l|l|l|l|l|l|l|}
\hline
group & ramification & \# of & largest  & ramification & \# of & largest\\
       &type       &   orbits  & orbit  &   type   & orbits   & orbit\\ \hline

$\ASL (5,2)$&(2D,3B,31A) & 1 & 1     &     (2D,8A,6F) & 16 & 1 \\ 
&(2D,3B,31B) & 1 & 1     &      (2D,12A,6F) & 16 & 1 \\ 
&(2D,3B,31C) & 1 & 1     &     (2D,6E,6F) & 22 & 1 \\ 
&(2D,3B,31E) & 1 & 1     &     (2D,5A,6F) & 18 & 1 \\ 
&(2D,3B,31D) & 1 & 1     &      (4A,4A,6F) & 12 & 1 \\ 
&(2D,3B,31F) & 1 & 1     &      (4A,3B,8A) & 12 & 1 \\ 
&(2D,3B,30A) & 1 & 1     &      (4A,3B,12A) & 12 & 1 \\ 
&(2D,3B,30B) & 1 & 1     &     (4A,3B,6E) & 24 & 1 \\ 

&(2D,4J,21B) & 2 & 1     &      (4A,3B,5A) & 18 & 1 \\ 
&(2D,4J,21A) & 2 & 1     &      (4I,3B,4J) & 18 & 1 \\ 
 
&(6C,3B,4J) & 12 & 1     &     (2D,2D,2D,6F) & 1 & 720  \\ 

&(2B,2D,3B,4J) & 1 & 84   &    (2D,2D,4A,3B) & 1 & 624 \\ \hline

\end{tabular}
}
\end{table}

\begin{table}[ht]
\caption{The Genus Zero Systems for Affine Primitive Groups of Degree 64 }
\centering
\scalebox{0.8}{
\begin{tabular}{|l|l|l|l|l|l|l|}
\hline
 group & ramification & \# of & largest  & ramification & \# of & largest\\
      & type       &   orbits  & orbit  &   type   & orbits   & orbit\\ \hline

$2^6:3^2:S_3$ &  (2E,3F,12A) & 1 & 1 & (2E,6C,12B) & 1 & 1 \\ \hline
$2^6:7:6$ & (2E,3B,12B) & 1 & 1 & (2E,3A,12A) & 1 & 1 \\ \hline
$2^6:(3^2:3):D_8$ & (2G,4D,6D) & 3 & 1 & (2F,4D,6E) & 3 & 1 \\ \hline
$2^6:(3^2:3):SD_{16}$ &  (2E,4G,8D)  & 1 & 1 & (2E,4G,8B)  & 1 & 1 \\ \hline
$2^6:(6 \times GL(3, 2))$ & (2F,3C,14A) & 1 & 1 & (2F,3C,14B) & 1 & 1 \\ \hline
$2^6:S_7$ & (2I,4N,6K) & 4 & 1 & (2I,4D,7A) & 3 & 1 \\ \hline
$2^6:(\GL (2, 2) \wr S_3)$ & (2L,4N,6I) & 4 & 1 & & &  \\ \hline
$2^6:(\GL (3, 2) \wr 2)$ & (2J, 4Q, 14H)& 1 & 1 & (2J, 4Q, 14G)& 1 & 1 \\
                      & (2I, 2J, 2J, 7B)& 1 & 1 & (2I, 2J, 2J, 7A) & 1 & 1 \\ \hline
$2^6:7^2:S_3$ & (2C,3A,14C)& 1 & 1 & (2C,3A,14D)& 1 & 1 \\ 
                 & (2C,3A,14E)& 1 & 1 & (2C,3A,14F)& 1 & 1 \\ 
                 & (2C,3A,14G)& 1 & 1 & (2C,3A,14H)& 1 & 1 \\ \hline
$2^6:A_7$  & (2D,4F,7A)  & 2 & 1 & (2D,4F,7B) & 2 & 1 \\ \hline
$2^6:\GL (3, 2)$ & (2G,4F,8D)& 1 & 1 & (2G,4F,8B) & 1 & 1 \\
               & (2G,4D,6C)& 2 & 1 &     &    &  \\ \hline
$2^6:S_8$   &   (2C,6L,6K) & 4 & 1 & (2C,4O,7A)  & 6 & 1\\ \hline 

$2^6:GO-(6, 2)$ &    (4H,6C,12I)& 2 & 1 & (2C,8E,12I) & 6 & 1 \\ \hline  
$\AGL (6,2)$ & (2B,3B,15D)     & 4 & 1 & (2B,3B,15E) & 4 & 1 \\ \hline

\end{tabular}
}
\end{table}

\begin{table}[ht]
\caption{The Genus Zero Systems for Primitive Groups of Degree 9}
\centering
\scalebox{0.8}{
\begin{tabular}{|l|l|l|l|l|l|l|}
\hline
group& ramification & \# of & largest  & ramification & \# of & largest\\
      & type       &   orbits      & orbit    &   type   & orbits   & orbit\\ \hline
 $3^2:4$      &  (2A,4A,4A) & 2 &1 & (2A,4B,4B) & 2 & 1\\ \hline
 
 $3^2:D(2\times 4)$  & (2C,4A,6A) & 1 & 1   &    (2A,4A,6B) & 1 & 1 \\
                &  (2A,2C,2C,6A) & 1  & 2   & (2A,2A,2C,6B) & 1& 2 \\
                &  (2A,2C,2B,4A)  &  1 & 4 &      (2A,2A,2C,2C,2B) & 1& 8 \\ \hline
                
$3^2:(2'A_4)$   &   (3B,4A,3E) & 1 & 1   &(3B,6B,4A)  & 1 & 1 \\
                 &    (3B,6A,3D) & 1 & 1   &(3A,4A,3D)   & 1 & 1 \\
                 & (3A,6B,3E) & 1 & 1   &(3A,6A,4A)   & 1 & 1 \\
                 & (3B,3B,3B,2A) & 1 & 1   &(3A,3A,3A,2A)  & 1 & 1 \\ \hline
$A\Gamma L(1, 9) $  & (2A,4A,8A) & 1 & 1  &(2A,4A,8B)  & 1 & 1 \\ \hline

$\AGL (2, 3)$ &(2A,3C,8B)& 1 & 1   & (2A,3C,8A) & 1 & 1   \\
            &(2A,6A,8B) & 1 & 1   & (2A,6A,8A)  & 1 & 1  \\
            &(2A,2A,2A,8B) & 1 & 16   &(2A,2A,2A,8A) & 1 & 16\\
            &(2A,2A,3A,3C) & 1 & 12   &(2A,2A,3A,4A) & 1 & 12   \\
            &(2A,2A,3A,6A)& 1 & 12   & (2A,2A,2A,2A,3A)& 1 & 216\\ \hline
\end{tabular}
}
\end{table}

\begin{table}[ht]
\caption{The Genus Zero Systems for Primitive Groups of Degree 27}
\centering
\scalebox{0.8}{
\begin{tabular}{|l|l|l|l|l|l|l|}
\hline
group& ramification & \# of & largest  & ramification & \# of & largest\\
      & type       &   orbits      & orbit    &   type   & orbits   & orbit\\ \hline

$3^3.A_4$ & (2A,3B,9D)  & 1 & 1   &(2A,3B,9B)& 1 & 1   \\
           & (2A,3A,9C) & 1 & 1   & (2A,3A,9A)& 1 & 1   \\ \hline
           
$3^3(A_4\times 2)$  & (2B,3D,12B) & 1 & 1   & (2B,3D,12A) & 1 & 1 \\
                     & (2A,2B,2B,3D) &1 &24    &  &  &\\ \hline

  $ 3^3.S_4 $     &   (2B,4A,9B)  & 1 & 1   &(2B,4A,9A)  & 1 & 1   \\ \hline
 
$3^3(S_4\times 2)$    & (2E,4A,6G)& 4 & 1   & (2B,2E,2E,4A) & 1 & 16   \\ \hline

$\ASL (3,3)$          &      (2A,3F,13D)  & 2 & 1   &(2A,3F,13C)  & 2 & 1  \\
                     &    (2A,3F,13B)  & 2 & 1   &(2A, 3F,13A)  & 2 & 1   \\ \hline
 $ \AGL (3, 3)$        &(2C,4A,13D) & 1 & 1   &(2C,4A,13C) & 1 & 1   \\
                     &   (2C,4A,13B)& 1 & 1   & (2C,4A,13A)& 1 & 1   \\
                     & (3E,6E,4A) & 8 & 1      & & &  \\ \hline        
 
\end{tabular}
}
\end{table}

\begin{table}[ht]
\caption{The Genus Zero Systems for Primitive Groups of Degree 49}
\centering
\scalebox{0.8}{
\begin{tabular}{|l|l|l|l|l|l|l|}
\hline
group& ramification & \# of & largest  & ramification & \# of & largest\\
      & type       &   orbits      & orbit    &   type   & orbits   & orbit\\ \hline
      
$7^2:4$ & (2A,4B,4B) & 12 & 1   & (2A,4A,4A)   & 12 & 1   \\ \hline

$7^2:3:D(2*4)$ & (2A,4A,6C) & 3 & 1   &(2A,4A,6B)& 3 & 1  \\ \hline

\end{tabular}
}
\end{table}

\begin{table}[ht]
\caption{The Genus Zero Systems for Primitive Groups of Degree 81}
\centering
\scalebox{0.8}{
\begin{tabular}{|l|l|l|l|l|l|l|}
\hline
group& ramification & \# of & largest  & ramification & \# of & largest\\
      & type       &   orbits      & orbit    &   type   & orbits   & orbit\\ \hline
      
$3^4:(\GL (1, 3) \wr S_4)$ & (6S,4C,6K) & 2 & 1 & & & \\ \hline

$3^4:(2 \times S_5)$ &  (6K,4A,6M)  & 1 & 1 & & & \\ \hline

$3^4:S_5$ & (6E,12A,3G) & 1 & 1 & & & \\ \hline
                                         
$\AGL (4,3)$ & (2C,5A,8E) & 1 & 1 & (2C,5A,8F) & 1 & 1 \\ \hline

\end{tabular}
}
\end{table}

\begin{table}[ht]
\caption{The Genus Zero Systems for Primitive Groups of Degree 121}
\centering
\scalebox{0.8}{
\begin{tabular}{|l|l|l|l|l|l|l|}
\hline
group& ramification & \# of & largest  & ramification & \# of & largest\\
      & type       &   orbits      & orbit    &   type   & orbits   & orbit\\ \hline
      
$11^2:3$ & (3A,3A,3A) & 40 & 1   & (3B,3B,3B)   & 40 & 1   \\ \hline

$11^2:4$ & (2A,4A,4A) & 30 & 1   &(2A,4B,4B)& 30 & 1  \\ \hline

$11^2:6$ & (2A,3B,6B) & 20 & 1  & (2A,3A,6A)& 30 & 1 \\ \hline

$11^2:(Q_8:D_6)$ & (2B,3A,8A)  & 5 & 1  &      (2B,3A,8B)& 5 & 1  \\ \hline

\end{tabular}
}
\end{table}

\end{appendix}

\end{document}